\newcommand{\sectionsize}{\footnotesize}
\newcommand{\theoremsize}{\bfseries}
\renewcommand{\subsectionautorefname}{\sectionsize\sf \subsectionautorefname}
\@ifdefinable\equationname{\let\equationname\equationautorefname}
\def\equationautorefname~#1\@empty\@empty\null{\protect{\theoremsize\sf
    (#1\@empty\@empty\null)}}% 
\@ifdefinable\AMSname{\let\AMSname\AMSautorefname}
\def\AMSautorefname~#1\@empty\@empty\null{\rm (#1\@empty\@empty\null)}%
\@ifdefinable\itemname{\let\itemname\itemautorefname}
\def\itemautorefname~#1\@empty\@empty\null{\theoremsize\sf #1\@empty\@empty\null%
}%
\newcommand{\basetheorem}[3]{%
    \newtheorem{#1}{#2}[#3]
    \newtheorem*{#1*}{#2}
    \expandafter\def\csname #1autorefname\endcsname{#2}
}%
\newcommand{\maketheorem}[3]{%
    \newaliascnt{#1}{#2}
    \newtheorem{#1}[#1]{\theoremsize\sf #3}
    \aliascntresetthe{#1}
    \expandafter\def\csname #1autorefname\endcsname{\theoremsize\sf #3}
    \newtheorem{#1*}{#3}
}%
\newcommand{\baseremark}[3]{%
    \newtheorem{#1}{#2}{#3}
    \newtheorem*{#1*}{#2}
    \expandafter\def\csname #1autorefname\endcsname{#2}
}%
\newcommand{\makeremark}[3]{%
    \newaliascnt{#1}{#2}
    \newtheorem{#1}[equation]{#3}
    \aliascntresetthe{#1}
    \expandafter\def\csname #1autorefname\endcsname{\theoremsize\sf #3}
    \newtheorem{#1*}{#3}
}%
\theoremstyle{plain}   %-------------------standard Style-------------------------
\DeclareMathAlphabet{\smallchanc}{OT1}{pzc}%
                                 {m}{it}
\DeclareFontFamily{OT1}{pzc}{}
\DeclareFontShape{OT1}{pzc}{m}{it}%
             {<-> s * [1.100] pzcmi7t}{}
\DeclareMathAlphabet{\mathchanc}{OT1}{pzc}%
                                 {m}{it}
\newcommand{\sE}{\mathscr{E}}
\newcommand{\sL}{\mathscr{L}}
\newcommand{\sO}{\mathscr{O}}
\newcommand{\sX}{\mathscr{X}}
\newcommand{\bN}{\mathbb{N}}
\newcommand{\bP}{\mathbb{P}}
\newcommand{\bQ}{\mathbb{Q}}
\newcommand{\bZ}{\mathbb{Z}}
\DeclareSymbolFont{largesymbolsA}{U}{jkpexa}{m}{n}
\DeclareMathSymbol{\varprod}{\mathop}{largesymbolsA}{16}
\newcommand{\LeftEqNo}{\let\veqno\@@leqno}
\newcommand{\properideal}%
        {\subsetneq}
\newcommand{\leteq}{\colon\!\!\!=}
\DeclareMathOperator{\kar}{char}
\DeclareMathOperator{\id}{{id}}
\DeclareMathOperator{\Pic}{Pic}
\newcommand{\factor}[2]{\left. \raise .2em\hbox{\ensuremath{#1}\vphantom{$I^d$}}
\hskip -.1em \right/ \hskip -.4em \raise -.3em\hbox{\ensuremath{#2}}}%
\newcommand\mtimes[3]{{\varprod_{#1}^{#2}}_{\raise 1ex \hbox{\scriptsize #3}}}%
\newcommand{\union}\cup
\newcommand{\intersect}\cap
\newcommand{\Union}\bigcup
\newcommand{\Intersect}\bigcap
\def\myoplus#1.#2.{\underset #1 \to {\overset #2 \to \oplus}}
\newcommand{\resto}[1]{\raise -.5ex\hbox{$\vert$}_{#1}}%\vphantom{\vert}}}
\newcommand{\ses}{short exact sequence\xspace}
\newcommand{\sess}{short exact sequences\xspace}
\newcommand{\sings}{singularities\xspace}
\newcommand{\Fsp}{$F$-split\xspace}
\definecolor{brick}{RGB}{204,0,0}
\def\@cite#1#2{{%
 \m@th\upshape\mdseries[{\small\sffamily #1}{\if@tempswa, \small\sffamily
   \color{brick} #2\fi}]}}
\setlist[enumerate]{itemsep=3pt,topsep=3pt,leftmargin=2em,label={\rm (\roman*)}}
\newlist{enumalpha}{enumerate}{1}
\setlist[enumalpha]{itemsep=3pt,topsep=3pt,leftmargin=2em,label=(\alph*\/)}
\newcounter{parentthmnumber}
\newcounter{currentparentthmnumber}
\newcounter{nexttag}
\newlist{thmlistaa}{enumerate}{1}
\setlist[thmlistaa]{label=(\arabic*), ref=\autoref{parentthm}\thethm(\arabic*)}
\newcommand*{\parentthmlabeldef}{%
  \expandafter\newcommand
  \csname parentthm\the\value{parentthmnumber}\endcsname
}
\newcommand*{\ptlget}[1]{%
  \romannumeral-`\x
  \ltx@ifundefined{parentthm\number#1}{%
    \ltx@space
    \parentthmundefined
  }{%
    \expandafter\ltx@space
    \csname mymacro\number#1\endcsname
  }%
}  
\newcommand*{\parentthmundefined}{\textbf{??}}
\newlist{thmlistrr}{enumerate}{1}
\setlist[thmlistrr]{label=(\roman*), ref=\autoref{parentthm}(\roman*)}
\newcounter{proofstep}%
\newcounter{lastyear}\setcounter{lastyear}{\the\year}
\newcommand\noin{\noindent}
\newcommand\input /home/kovacs/tex/latex/{\input /home/kovacs/tex/latex/} %$ 
\newcommand\CM{Cohen-Macaulay\xspace}
\newtheoremstyle{bozont}{3pt}{3pt}%
     {\itshape}%         Body font
     {}%         Indent amount (empty = no indent, \parindent = para indent)
     {\bfseries}% Thm head font
     {.}%        Punctuation after thm head
     {.5em}%     Space after thm head (\newline = linebreak)
     {\thmname{#1}\thmnumber{ #2}\thmnote{ \rm #3}}%         Thm head spec
\newtheoremstyle{bozont-sub}{3pt}{3pt}%
     {\itshape}% Body font
     {}% Indent amount (empty = no indent, \parindent = para indent)
     {\bfseries}% Thm head font
     {.}% Punctuation after thm head
     {.5em}% Space after thm head (\newline = linebreak)
     {\thmname{#1}\ \arabic{section}.\arabic{thm}.\thmnumber{#2}\thmnote{ \rm #3}}
\newtheoremstyle{bozont-named-thm}{3pt}{3pt}%
     {\itshape}%         Body font
     {}%         Indent amount (empty = no indent, \parindent = para indent)
     {\bfseries}% Thm head font
     {.}%        Punctuation after thm head
     {.5em}%     Space after thm head (\newline = linebreak)
     {\thmname{#1}\thmnumber{#2}\thmnote{ #3}}%         Thm head spec
\newtheoremstyle{bozont-named-bf}{3pt}{3pt}%
     {}%         Body font
     {}%         Indent amount (empty = no indent, \parindent = para indent)
     {\bfseries}% Thm head font
     {.}%        Punctuation after thm head
     {.5em}%     Space after thm head (\newline = linebreak)
     {\thmname{#1}\thmnumber{#2}\thmnote{ #3}}%         Thm head spec
\newtheoremstyle{bozont-named-sf}{3pt}{3pt}%
     {}%         Body font
     {}%         Indent amount (empty = no indent, \parindent = para indent)
     {\sffamily}% Thm head font
     {.}%        Punctuation after thm head
     {.5em}%     Space after thm head (\newline = linebreak)
     {\thmname{#1}\thmnumber{#2}\thmnote{ #3}}%         Thm head spec
\newtheoremstyle{bozont-named-sc}{3pt}{3pt}%
     {}%         Body font
     {}%         Indent amount (empty = no indent, \parindent = para indent)
     {\scshape}% Thm head font
     {.}%        Punctuation after thm head
     {.5em}%     Space after thm head (\newline = linebreak)
     {\thmname{#1}\thmnumber{#2}\thmnote{ #3}}%         Thm head spec
\newtheoremstyle{bozont-named-it}{3pt}{3pt}%
     {}%         Body font
     {}%         Indent amount (empty = no indent, \parindent = para indent)
     {\itshape}% Thm head font
     {.}%        Punctuation after thm head
     {.5em}%     Space after thm head (\newline = linebreak)
     {\thmname{#1}\thmnumber{#2}\thmnote{ #3}}%         Thm head spec
\newtheoremstyle{bozont-sf}{3pt}{3pt}%
     {}%         Body font
     {}%         Indent amount (empty = no indent, \parindent = para indent)
     {\sffamily}% Thm head font
     {.}%        Punctuation after thm head
     {.5em}%     Space after thm head (\newline = linebreak)
     {\thmname{#1}\thmnumber{ #2}\thmnote{ \rm #3}}%         Thm head spec
\newtheoremstyle{bozont-sc}{3pt}{3pt}%
     {}%         Body font
     {}%         Indent amount (empty = no indent, \parindent = para indent)
     {\scshape}% Thm head font
     {.}%        Punctuation after thm head
     {.5em}%     Space after thm head (\newline = linebreak)
     {\thmname{#1}\thmnumber{ #2}\thmnote{ \rm #3}}%         Thm head spec
\newtheoremstyle{bozont-remark}{3pt}{3pt}%
     {}%         Body font
     {}%         Indent amount (empty = no indent, \parindent = para indent)
     {\scshape}% Thm head font
     {.}%        Punctuation after thm head
     {.5em}%     Space after thm head (\newline = linebreak)
     {\thmname{#1}\thmnumber{ #2}\thmnote{ \rm #3}}%         Thm head spec
\newtheoremstyle{bozont-subremark}{3pt}{3pt}%
     {}%         Body font
     {}%         Indent amount (empty = no indent, \parindent = para indent)
     {\scshape}% Thm head font
     {.}%        Punctuation after thm head
     {.5em}%     Space after thm head (\newline = linebreak)
     {\thmname{#1}\ \arabic{section}.\arabic{thm}.\thmnumber{#2}\thmnote{ \rm #3}}
\newtheoremstyle{bozont-def}{3pt}{3pt}%
     {}%         Body font
     {}%         Indent amount (empty = no indent, \parindent = para indent)
     {\bfseries}% Thm head font
     {.}%        Punctuation after thm head
     {.5em}%     Space after thm head (\newline = linebreak)
     {\thmname{#1}\thmnumber{ #2}\thmnote{ \rm #3}}%         Thm head spec
\newtheoremstyle{bozont-reverse}{3pt}{3pt}%
     {\itshape}%         Body font
     {}%         Indent amount (empty = no indent, \parindent = para indent)
     {\bfseries}% Thm head font
     {.}%        Punctuation after thm head
     {.5em}%     Space after thm head (\newline = linebreak)
     {\thmnumber{#2.}\thmname{ #1}\thmnote{ \rm #3}}%         Thm head spec
\newtheoremstyle{bozont-reverse-sc}{3pt}{3pt}%
     {\itshape}%         Body font
     {}%         Indent amount (empty = no indent, \parindent = para indent)
     {\scshape}% Thm head font
     {.}%        Punctuation after thm head
     {.5em}%     Space after thm head (\newline = linebreak)
     {\thmnumber{#2.}\thmname{ #1}\thmnote{ \rm #3}}%         Thm head spec
\newtheoremstyle{bozont-reverse-sf}{3pt}{3pt}%
     {\itshape}%         Body font
     {}%         Indent amount (empty = no indent, \parindent = para indent)
     {\sffamily}% Thm head font
     {.}%        Punctuation after thm head
     {.5em}%     Space after thm head (\newline = linebreak)
     {\thmnumber{#2.}\thmname{ #1}\thmnote{ \rm #3}}%         Thm head spec
\newtheoremstyle{bozont-remark-reverse}{3pt}{3pt}%
     {}%         Body font
     {}%         Indent amount (empty = no indent, \parindent = para indent)
     {\sc}% Thm head font
     {.}%        Punctuation after thm head
     {.5em}%     Space after thm head (\newline = linebreak)
     {\thmnumber{#2.}\thmname{ #1}\thmnote{ \rm #3}}%         Thm head spec
\newtheoremstyle{bozont-def-reverse}{3pt}{3pt}%
     {}%         Body font
     {}%         Indent amount (empty = no indent, \parindent = para indent)
     {\bfseries}% Thm head font
     {.}%        Punctuation after thm head
     {.5em}%     Space after thm head (\newline = linebreak)
     {\thmnumber{#2.}\thmname{ #1}\thmnote{ \rm #3}}%         Thm head spec
\newtheoremstyle{bozont-def-newnum-reverse}{3pt}{3pt}%
     {}%         Body font
     {}%         Indent amount (empty = no indent, \parindent = para indent)
     {\bfseries}% Thm head font
     {}%        Punctuation after thm head
     {.5em}%     Space after thm head (\newline = linebreak)
     {\thmnumber{#2.}\thmname{ #1}\thmnote{ \rm #3}}%         Thm head spec
\newtheoremstyle{bozont-def-newnum-reverse-plain}{3pt}{3pt}%
   {}% Body font
   {}% Indent amount (empty = no indent, \parindent = para indent)
   {}% Thm head font
   {}% Punctuation after thm head
   {.5em}% Space after thm head (\newline = linebreak)
   {\thmnumber{\!(#2)}\thmname{ #1}\thmnote{ \rm #3}}% Thm head spec
\newtheoremstyle{bozont-number}{3pt}{3pt}%
   {}% Body font
   {}% Indent amount (empty = no indent, \parindent = para indent)
   {}% Thm head font
   {}% Punctuation after thm head
   {0pt}% Space after thm head (\newline = linebreak)
   {\thmnumber{\!(#2)} }% Thm head spec
\newtheoremstyle{bozont-step}{3pt}{3pt}%
   {\itshape}% Body font
   {}% Indent amount (empty = no indent, \parindent = para indent)
   {\scshape}% Thm head font
   {}% Punctuation after thm head
   {.5em}% Space after thm head (\newline = linebreak)
   {$\boxed{\text{\sc \thmname{#1}~\thmnumber{#2}:\!}}$}
\theoremstyle{plain}    
\newtheorem{proclaim-special}[proclaim]{\specialthmname}
\theoremstyle{bozont-subremark}
\theoremstyle{bozont-remark}
\newtheorem*{SubHeading*}{\SubHeadingName}%
\newtheorem{SubHeading}[proclaim]{\SubHeadingName}
\newtheorem{sSubHeading}[equation]{\sSubHeadingName}
\newenvironment{demo-r}[1]{\def\SubHeadingName{#1}\begin{SubHeading-r}}
  {\end{SubHeading-r}}%
\newenvironment{subdemo-r}[1]{\def\sSubHeadingName{#1}\begin{sSubHeading-r}}
  {\end{sSubHeading-r}} %
\newenvironment{demo*}[1]{\def\SubHeadingName{#1}\begin{SubHeading*}}
  {\end{SubHeading*}}%
\newtheorem*{ack}{Acknowledgment}
\newtheorem{SubHeading-r}[proclaim]{\SubHeadingName}
\newtheorem{sSubHeading-r}[equation]{\sSubHeadingName}
\newtheorem{proclaimr-special}[proclaim]{\specialthmname}
{\def\specialthmname{#1}\begin{proclaimr-special}}%
{\end{proclaimr-special}}
\newtheorem{newnumspecial}[proclaim]{\specialnewnumname}
\newtheorem{bstep}{Step}
\newcounter{thisthm} 
\newcounter{thissection} 
\newcommand{\iref}[1]{%
  (\the\value{thissection}.\the\value{thisthm}.\ref{#1})}
\newcounter{lect}
\newcounter{topic}
\newenvironment{enumerate-p}{
  \begin{enumerate}}
  {\setcounter{equation}{\value{enumi}}\end{enumerate}}
\newenvironment{enumerate-cont}{
  \begin{enumerate}
    {\setcounter{enumi}{\value{equation}}}}
  {\setcounter{equation}{\value{enumi}}
  \end{enumerate}}
\newlength{\swidth}
\newcounter{stepp}
\newcommand{\mypagesize}{
\textwidth= 6.5in
\textheight=8.75in
\voffset-.5in
\hoffset-.75in
\marginparwidth=56pt
\footskip.5in
}
\begin{document}

\title{Frobenius split anticanonical divisors}%
\author{S\'andor J Kov\'acs}%
\date{February 3, 2019} %\usdate\today}
% Activate to display a given date or no date
\thanks{Supported in part by NSF Grant DMS-1565352 and the Craig McKibben and Sarah
  Merner Endowed Professorship in Mathematics at the University of Washington.} %
\address{University of Washington, Department of Mathematics, Seattle, WA 98195, USA} 
\email{skovacs@uw.edu\xspace}
\urladdr{http://www.math.washington.edu/$\sim$kovacs\xspace}
\maketitle

\centerline{\it Dedicated to Emma Previato on the occasion of her 65th birthday}

\begin{abstract}
  In this note I extend two theorems of Sommese regarding abelian varieties to
  arbitrary characteristic; that an abelian variety cannot be an ample divisor in a
  smooth projective variety and that a cone over an abelian variety of dimension at
  least two is not smoothable.
\end{abstract}

%\dedication{\it Dedicated to Emma Previato on the occasion of her 65th birthday}

\section{Introduction}
\noindent
The main goal of this note is to extend to arbitrary characteristic two theorems of
Sommese regarding abelian varieties; that an abelian variety cannot be an ample
divisor in a smooth projective variety \cite{MR0404703} and that a cone over an
abelian variety of dimension at least two is not smoothable \cite{MR522037}. Note
that the latter statement has already been extended to arbitrary characteristic in
\cite{KollarKovacs18b}, but the proof given here is different and arguably more
direct.  I give a new proof and a slightly stronger version of both results of
Sommese already in the characteristic zero case.
The main technical ingredient in positive characteristic is a sort of lifting theorem
(for the definition of \Fsp see \autoref{def:F-split}):
\begin{thm}[cf.~\autoref{cor:F-split-lifts-from-anti-canonical}]
  \label{thm:lift-intro} 
  Let $X$ be a smooth projective variety over an algebraically closed field $k$ of
  $\kar k>0$ and $D\subseteq X$ an effective anti-canonical divisor, i.e., 
  such that
  $\omega_X\simeq \sO_X(-D)$. If $D$ is \Fsp then so is $X$.
\end{thm}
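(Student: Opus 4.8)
The plan is to write down an explicit Frobenius splitting of $X$ coming from the canonical section of $D$, and then to detect that it really splits by restricting it to $D$ and applying Fedder's criterion. First I would recall Grothendieck duality for the finite Frobenius $F\colon X\to X$: since $F^*\omega_X\cong\omega_X^{\otimes p}$ one has $F^!\sO_X\cong\omega_X^{1-p}$, and hence a natural identification
\[
\Hom_{\sO_X}(F_*\sO_X,\sO_X)\;\cong\;H^0\bigl(X,\omega_X^{1-p}\bigr).
\]
By hypothesis $\omega_X\cong\sO_X(-D)$, so $\omega_X^{1-p}\cong\sO_X\bigl((p-1)D\bigr)$, and this group contains the distinguished section $s_D^{\,p-1}$, where $s_D$ cuts out $D$. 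Let $\phi_0\colon F_*\sO_X\to\sO_X$ be the map corresponding to $s_D^{\,p-1}$. Since $H^0(X,\sO_X)=k$, the composite $\phi_0\circ F^{\#}$ is multiplication by a scalar $c=\phi_0(1)\in k$, and $X$ is \Fsp as soon as $c\ne 0$ (then $c^{-1}\phi_0$ splits Frobenius). Thus everything reduces to showing $c\ne 0$.

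\emph{Compatibility with $D$.} Locally, writing $u$ for an equation of $D$, the map $\phi_0$ is $h\mapsto\Tr_X(u^{p-1}h)$, where $\Tr_X\colon F_*\sO_X\to\sO_X$ is the trace dual to Frobenius, obeying the projection formula $\Tr_X(g^{p}h)=g\,\Tr_X(h)$. The anticanonical twist is exactly what makes $\phi_0$ preserve the ideal of $D$: a local section of $\sO_X(-D)$ is $u\cdot g$, and $\Tr_X(u^{p-1}\cdot ug)=\Tr_X(u^{p}g)=u\,\Tr_X(g)\in\sO_X(-D)$. Hence $\phi_0$ descends to $\bar\phi_0\colon F_*\sO_D\to\sO_D$ fitting into a commutative square with the restriction maps $F_*\sO_X\to F_*\sO_D$ and $\sO_X\to\sO_D$. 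Chasing $1$ around this square yields the crucial identity $\bar\phi_0(1)=c|_D$.

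\emph{Fedder and conclusion.} Since $D$ is a Cartier divisor in the smooth variety $X$, at each $x\in D$ the local ring is $\sO_{D,x}=R/(u)$ with $R$ regular, and $\bar\phi_0(1)=\overline{\Tr_X(u^{p-1})}$. Fedder's criterion says that $\sO_{D,x}$ is \Fp precisely when $u^{p-1}\notin\frm_x^{[p]}$, equivalently when $\Tr_X(u^{p-1})$ is a unit, i.e. when $\bar\phi_0(1)$ is a unit at $x$. If $D$ is \Fsp then it is \Fp at every point, so $\bar\phi_0(1)$ vanishes nowhere on $D$; but $\bar\phi_0(1)=c|_D$ is the restriction of the constant $c$, and this forces $c\ne 0$. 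Therefore $X$ is \Fsp. (In fact \Fp-ity of $D$ at a single point already forces $c\ne 0$, which is the promised slightly stronger statement.)

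\emph{Main obstacle.} The delicate part is the bookkeeping of the previous paragraph: one must verify that the power $(p-1)$ is exactly the one for which $\phi_0$ preserves $\sO_X(-D)$ (this is where the projection formula enters), and that the descended map evaluated at $1$ is literally the restriction of the global scalar $c$. Once that is in place, Fedder's criterion converts the purely local \Fp-ity of $D$ into the nonvanishing of the single number $c$, and — importantly — no global cohomological vanishing is needed. This is precisely what lets the theorem survive in cases such as $D$ an abelian variety, where $H^1(X,\omega_X)\ne 0$ and a naive lifting-of-splittings argument would fail.
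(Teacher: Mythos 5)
Your construction is fine up to the last step: realizing the candidate splitting as $\phi_0\in\Hom_{\sO_X}(F_*\sO_X,\sO_X)\cong H^0(X,\omega_X^{1-p})\cong H^0(X,\sO_X((p-1)D))$ corresponding to $s_D^{\,p-1}$, checking via the projection formula that $\phi_0$ preserves $\sO_X(-D)$, and reducing everything to $c=\phi_0(1)\neq 0$ is a legitimate strategy, and the compatibility computation is correct. The fatal step is the appeal to Fedder. The claimed equivalence ``$u^{p-1}\notin\frm_x^{[p]}$ if and only if $\Tr_X(u^{p-1})$ is a unit'' is not Fedder's criterion: the left-hand condition is equivalent to the existence of \emph{some} local $h$ with $\Tr_X(h\,u^{p-1})$ a unit, not to unitality of $\Tr_X(u^{p-1})$ itself (already for $R=k[x]_{(x)}$, $p=2$, $g=1$ one has $g\notin\frm^{[p]}$ but $\Tr(g)=0$). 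Worse, the only hypothesis your argument actually consumes is that $D$ is \Fp at every point, which is a purely \emph{local} condition; every regular local ring of positive characteristic is $F$-pure, so for smooth $D$ this hypothesis carries no information, whereas \Fsp{}ness of $D$ is genuinely global. Concretely, take $X=\bP^2$ and $D=(u=0)$ a smooth \emph{supersingular} cubic. Then $D$ is \Fp at every point, yet $c=0$: indeed, if $c\neq 0$ then by your own compatibility computation $c^{-1}\phi_0$ would split $X$ compatibly with $D$ and hence split $D$, which is impossible for a supersingular curve (equivalently, $c$ is up to a nonzero scalar the coefficient of $(x_0x_1x_2)^{p-1}$ in $u^{p-1}$, i.e.\ the Hasse invariant). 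So the implication ``\Fp at every point $\Rightarrow c\neq 0$'' that your proof rests on is false, and no purely local argument can prove the theorem.

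The gap can be closed, but only by using the global splitting of $D$. For instance: one checks locally that the descended map $\bar\phi_0$ is nonzero at every point of $D$ (because $u^{p-1}\notin(u^p)$), and then uses that for $D$ \Fsp (hence reduced) with $\omega_D\simeq\sO_D$, the space $\Hom_{\sO_D}(F_*\sO_D,\sO_D)\simeq H^0(D,\omega_D^{1-p})\simeq H^0(D,\sO_D)$ is, on each connected component, one-dimensional and spanned by an honest splitting; proportionality then forces $\bar\phi_0(1)=c\resto{D}$ to be nonzero. Note that this is a global duality statement, not Fedder. The paper's own proof is entirely different and cohomological: by \autoref{prop:crit-for-F-split} it suffices to produce $\sigma_X\colon\omega_X\to F_*\omega_X$ with $H^n(\sigma_X)\neq 0$, and in \autoref{cor:F-split-lifts-from-anti-canonical} this is done by taking the Frobenius comorphism $\eta_X\colon\sO_X\to F_*\sO_X$ (a tautological lift of $\eta_D$ under $\omega_X(D)\simeq\sO_X$), restricting it to $\omega_X\simeq\sO_X(-D)$, and chasing the long exact sequence of $0\to\omega_X\to\omega_X(D)\to\omega_D\to 0$ as in \autoref{thm:lifted-sigma-implies-F-split}; the key input is $H^n(X,\omega_X(D))\simeq H^0(X,\sO_X(-D))^\vee=0$ by Serre duality. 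So, contrary to your closing remark, the correct proof \emph{is} a lifting argument and \emph{does} use global duality; it is the purely local route that cannot work.
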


\noin
This in turn is used to prove the vanishing of several cohomology groups:
\begin{thm}[cf.~\autoref{thm:F-split-anti-canonical-implies-ACM}]
  Let $X$ be a smooth projective variety over an algebraically closed field $k$ of
  $\kar k>0$ and $D\subseteq X$ an effective ample divisor such that
  $\omega_D\simeq \sO_D$. If $D$ is \Fsp and $\dim X\geq 3$, then
  \begin{enumerate}
  \item $H^i(X,\sO_X)=0$ for $i>0$, and
  \item $H^j(D,\sO_D)=0$ for $0<j<\dim D$.
  \end{enumerate}
\end{thm}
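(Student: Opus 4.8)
The plan is to deduce both statements from the single fact that $X$ itself is \Fsp, together with a Kodaira-type vanishing theorem for \Fsp{} varieties, so the first order of business is to put ourselves in a position to apply \autoref{thm:lift-intro}. By adjunction $\omega_D\simeq\left(\omega_X\otimes\sO_X(D)\right)\big|_D$ (note $D$ is a Cartier divisor in a smooth variety, hence Gorenstein, so $\omega_D$ is an invertible dualizing sheaf and this is an honest isomorphism of line bundles). Thus the hypothesis $\omega_D\simeq\sO_D$ says precisely that $N:=\omega_X\otimes\sO_X(D)$ restricts to the trivial bundle on $D$. Since $D$ is ample and $\dim X\geq 3$, the Grothendieck--Lefschetz theorem makes the restriction $\Pic X\to\Pic D$ injective, whence $N\simeq\sO_X$; that is, $\omega_X\simeq\sO_X(-D)$ and $D$ is an effective anti-canonical divisor. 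Now \autoref{thm:lift-intro} applies and shows that $X$ is \Fsp.

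Next I would invoke \Fsp{} Kodaira vanishing (Mehta--Ramanathan): the splitting of Frobenius induces, for every line bundle $L$, split injections $H^i(X,L)\hookrightarrow H^i(X,L^{p^e})$, and applying this to $L=\sO_X(-D)$ together with Serre duality and Serre vanishing on the smooth projective $n$-fold $X$ (with $n:=\dim X$) yields
\[
  H^i\bigl(X,\sO_X(-D)\bigr)=0\qquad\text{for all } i<n .
\]
Combining this with Serre duality and the identification $\omega_X\simeq\sO_X(-D)$ gives, for every $i>0$,
\[
  H^i(X,\sO_X)\simeq H^{n-i}\bigl(X,\omega_X\bigr)^{\vee}\simeq H^{n-i}\bigl(X,\sO_X(-D)\bigr)^{\vee}=0,
\]
because $n-i<n$ (the case $i=n$ is the elementary vanishing $H^0(X,\sO_X(-D))=0$). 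This proves (i).

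For (ii) I would feed (i) back into the long exact cohomology sequence of the structure sequence $0\to\sO_X(-D)\to\sO_X\to\sO_D\to 0$. For an index $j$ with $0<j<\dim D=n-1$ the relevant segment reads
\[
  H^j(X,\sO_X)\longrightarrow H^j(D,\sO_D)\longrightarrow H^{j+1}\bigl(X,\sO_X(-D)\bigr).
\]
The left-hand group vanishes by (i) since $j>0$, and the right-hand group vanishes by the \Fsp{} Kodaira vanishing of the previous paragraph since $j+1\leq n-1<n$. Hence $H^j(D,\sO_D)=0$, which is (ii).

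The one genuinely substantive step is the first paragraph. The hypothesis is recorded in terms of $\omega_D$, whereas \autoref{thm:lift-intro} needs $\omega_X$, so the heart of the matter is to promote the triviality of $\omega_X\otimes\sO_X(D)$ from $D$ to all of $X$; this is exactly where $\dim X\geq 3$ (equivalently $\dim D\geq 2$) is indispensable. Once $X$ is known to be \Fsp{} the rest is formal, as vanishing plus duality yields (i) and the structure sequence plus (i) yields (ii). If one prefers to avoid quoting Grothendieck--Lefschetz as a black box, the same injectivity of $\Pic X\to\Pic D$ can be obtained directly: the \Fsp-ness of $D$ and Serre duality on $D$ force $H^1\bigl(D,\sO_X(-mD)\big|_D\bigr)=0$ for all $m\geq 1$ as soon as $\dim D\geq 2$, which lets one extend a trivialization of $N\big|_D$ over every infinitesimal neighborhood of $D$ and then algebraize it using the ampleness of $D$.
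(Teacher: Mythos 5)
Your proof is correct, and from the moment $\omega_X\simeq\sO_X(-D)$ is established it coincides with the paper's: both lift the Frobenius splitting from the anti-canonical divisor $D$ to $X$ (your appeal to \autoref{thm:lift-intro} is the paper's \autoref{cor:F-split-lifts-from-anti-canonical}), both then run Mehta--Ramanathan vanishing (\autoref{thm:kod-van-for-Fsp}) on the now-\Fsp variety $X$ to get $H^i(X,\sO_X(-D))=0$ for $i<n$, and both finish (i) by Serre duality and (ii) by the structure sequence of $D$, exactly as in \autoref{thm:F-split-anti-canonical-implies-ACM}.

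The genuine divergence is in the first step, and there is one caveat about how you phrase it. The paper (\autoref{thm:omega-is-minus-A}) proves $\omega_X(D)\simeq\sO_X$ without ever leaving coherent cohomology on $X$: Mehta--Ramanathan vanishing on $D$ is transferred to $X$ by the ascending-induction/Serre-vanishing lemma (\autoref{lem:vanishing-from-divisor-to-ambient}, via \autoref{cor:KV-for-anti-canonical}), giving $H^1(X,\omega_X)=0$; then a trivializing section of $\omega_D$ lifts to a section of $\omega_X(D)$ nowhere zero on $D$, hence nowhere zero since $D$ is ample. You instead prove injectivity of $\Pic X\to\Pic D$. Here be careful: since the theorem lives in characteristic $p$, quoting Grothendieck--Lefschetz as a black box is not legitimate. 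The unconditional statement in SGA2 concerns the formal completion, $\Pic X\hookrightarrow\Pic\hat X$; descending from $\Pic\hat X$ to $\Pic D$ requires $H^1(D,\sO_X(-nD)|_D)=0$ for all $n\geq1$ (these groups are the kernels of $\Pic D_{n+1}\to\Pic D_n$ on infinitesimal neighborhoods), and this Kodaira-type vanishing is not automatic in characteristic $p$. Taken alone, that citation would be a genuine gap. Your closing paragraph, however, supplies precisely the missing ingredient: \Fsp-ness of $D$ plus Serre duality (i.e.\ \autoref{thm:kod-van-for-Fsp} applied on $D$, using $\dim D\geq2$) gives the vanishing, the trivialization of $\omega_X(D)|_D$ then propagates through all infinitesimal neighborhoods, and Serre vanishing on $X$ together with ampleness of $D$ algebraizes it. Promoted from an aside to the actual argument, this makes your proof complete, and it consumes the same inputs as the paper's route --- Mehta--Ramanathan on $D$, Serre vanishing on $X$, and ``an effective divisor disjoint from an ample divisor is empty'' --- just organized through infinitesimal neighborhoods of $D$ rather than through negative twists on $X$. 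The paper's bookkeeping is more elementary (no formal geometry or algebraization), while yours has the merit of exhibiting the key step as an instance of the Lefschetz principle for Picard groups, made valid in characteristic $p$ by the \Fsp hypothesis.
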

\begin{rem}
  See \autoref{cor:F-split-lifts-from-anti-canonical} and
  \autoref{thm:F-split-anti-canonical-implies-ACM} for stronger versions of these
  statements.
\end{rem}

\noin Finally, these vanishing results and Kawamata-Viehweg vanishing
\cite{Kawamata82c,Viehweg82b} in characteristic zero are used to prove characteristic
independent versions of Sommese's theorems in \autoref{cor:sommese} and
\autoref{cor:sommese-two}.

\begin{ack}
  I would like to thank Max Lieblich for useful conversations 
  % and for his help with the proof of \autoref{lem:h1-not-zero}
  and the referee for helpful comments.
  
  This work was supported in part by NSF Grant DMS-1565352 and the Craig McKibben and
  Sarah Merner Endowed Professorship in Mathematics at the University of Washington.
\end{ack}

\section{Frobenius splitting and vanishing}
\noin
The following notation will be used throughout the article. 

\begin{notation}\label{notation}
  Let $k$ be an algebraically closed field and $X$ a scheme over $k$. If
  $\kar k= p>0$ then let $F:X\to X$ denote the (absolute) Frobenius morphism. Recall
  that $F$ is the identity on the underlying space $X$ and its comorphism on the
  sheaf of regular functions is the $p^\text{th}$ power map: $\sO_X\to F_*\sO_X$
  given by $f\mapsto f^p$.
\end{notation}

\begin{defini}\label{def:F-split}
  A scheme $X$ over $k$ of $\kar k>0$ is called \emph{Frobenius split} or
  \emph{\Fsp} if the natural morphism $\eta:\sO_X\to F_*\sO_X$ has a left inverse,
  i.e., $\exists \eta':F_*\sO_X\to \sO_X$ such that $\eta'\circ\eta=\id_{\sO_X}$.
\end{defini}

It was proved in \cite[Prop.~2]{MR799251} that Kodaira vanishing \cite{Kodaira53}
holds on smooth projective \Fsp varieties. In fact, Mehta-Ramanathan's proof works in
a slightly more general setting:

\begin{thm}[Mehta-Ramanathan]\label{thm:kod-van-for-Fsp}
  Let $X$ be an equidimensional projective \CM scheme over $k$ of $\kar k>0$
  % an algebraically closed field $k$ of positive characteristic 
  and let $\sL$ be an ample line bundle on $X$. If $X$ is \Fsp, then
  \[
  \xymatrix{%
    H^j(X, %\omega_X\otimes
    \sL^{-1})=0 }
  \]
  for $j<\dim X$.
\end{thm}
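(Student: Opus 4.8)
The plan is to exploit the standard consequence of Frobenius splitting that makes any line bundle a direct summand of its own Frobenius pushforward, and then to trade the resulting twist for a large power of $\sL^{-1}$ where ordinary vanishing applies. Concretely, since $X$ is \Fsp, the left inverse $\eta':F_*\sO_X\to\sO_X$ of $\eta$ is an $\sO_X$-module map. Tensoring the composite $\sO_X\xrightarrow{\eta}F_*\sO_X\xrightarrow{\eta'}\sO_X$ with $\sL^{-1}$ and invoking the projection formula together with the identification $F^*\sL^{-1}\simeq\sL^{-p}$ (valid because the comorphism of the absolute Frobenius is the $p^{\text{th}}$ power map), one obtains maps
\[
\sL^{-1}\longrightarrow F_*\big(\sL^{-p}\big)\longrightarrow\sL^{-1}
\]
whose composite is the identity. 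Thus $\sL^{-1}$ is a direct summand of $F_*(\sL^{-p})$.

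Next I would pass to cohomology. Because the absolute Frobenius is the identity on the underlying topological space (equivalently, $F$ is finite, hence affine, so $R^iF_*$ vanishes for $i>0$), one has $H^j(X,F_*\sL^{-p})\simeq H^j(X,\sL^{-p})$ for every $j$. Therefore $H^j(X,\sL^{-1})$ is a direct summand of $H^j(X,\sL^{-p})$, and applying the same argument to $\sL^{-p},\sL^{-p^2},\dots$ shows by transitivity that $H^j(X,\sL^{-1})$ is a direct summand of $H^j(X,\sL^{-p^e})$ for every $e\geq1$. Consequently it suffices to prove that $H^j(X,\sL^{-p^e})=0$ for some (equivalently, all sufficiently large) $e$ whenever $j<\dim X$.

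Finally I would establish this vanishing by Serre duality. Here is where the remaining hypotheses enter: since $X$ is projective, equidimensional and \CM of dimension $n=\dim X$, its dualizing complex is a genuine sheaf $\omega_X^\circ$ concentrated in a single degree, and Serre duality gives $H^j(X,\sL^{-m})\simeq H^{n-j}\big(X,\omega_X^\circ\otimes\sL^{m}\big)^\vee$. For $j<n$ the exponent $n-j$ is positive, so Serre vanishing for the ample bundle $\sL$ yields $H^{n-j}(X,\omega_X^\circ\otimes\sL^{m})=0$ once $m\gg0$; taking $m=p^e$ with $e$ large then forces $H^j(X,\sL^{-1})=0$. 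The one step that genuinely requires care---and the reason the Cohen--Macaulay and equidimensionality assumptions cannot be dropped---is this appeal to Serre duality with $\omega_X^\circ$ an honest sheaf placed in the right degree; without these hypotheses the dualizing object is a complex spread over several degrees, the clean duality isomorphism above fails, and the whole reduction breaks down.
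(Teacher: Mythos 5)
Your proof is correct and takes essentially the same route as the paper: the paper's proof is a one-line citation of Serre duality \cite[7.6(b)]{Hartshorne77} together with \cite[Prop.~1]{MR799251}, and what you do is simply unpack that citation, reproving the Mehta--Ramanathan step (the splitting realizes $H^j(X,\sL^{-1})$ as a direct summand of $H^j(X,\sL^{-p^e})$ for all $e$) and then killing the high Frobenius power by exactly the same combination of Serre duality for equidimensional \CM projective schemes and Serre vanishing.
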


\begin{proof}
  This follows directly from Serre duality \cite[7.6(b)]{Hartshorne77} and
  \cite[Prop.~1]{MR799251}.
\end{proof}

% This will be the cornerstone of our proof. 

The proof of the following simple lemma uses the usual trick of obtaining a more
precise vanishing statement from Serre vanishing and surjective maps.
% is essentially contained in
%\cite[Lemma~I-B]{MR0404703}, but this statement is slightly more general than that.

\begin{lem}\label{lem:vanishing-from-divisor-to-ambient}
  Let $X$ be an equidimensional projective \CM scheme over $k$ (of arbitrary
  characteristic) and $D\subseteq X$ an effective, ample Cartier divisor.  Fix an
  $m_0\in\bZ$ and a $j\in\bN$ such that $j<\dim X$ and let $\sE$ be a locally free
  sheaf on $X$.  Assume that $H^j(D,(\sE(-mD))\resto D)=0$ for each $m\geq m_0$.
  Then $H^j(X,\sE(-mD))=0$ for each $m\geq m_0$.
\end{lem}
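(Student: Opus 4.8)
The plan is to propagate the given vanishing on $D$ back to $X$ through the standard restriction \ses, anchoring the argument at a single large twist where Serre vanishing (via Serre duality on the \CM scheme $X$) supplies the needed vanishing.

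First I would record the base case: $H^j(X,\sE(-mD))=0$ for all $m\gg 0$. Writing $n=\dim X$ and $\sL=\sO_X(D)$, which is ample, and using that $X$ is equidimensional projective \CM, Serre duality gives
\[
H^j(X,\sE(-mD))\cong H^{n-j}\!\bigl(X,\sE^\vee\otimes\omega_X^\circ\otimes\sL^{m}\bigr)^\vee,
\]
where $\omega_X^\circ$ is the dualizing sheaf and I have used $(\sE(-mD))^\vee\cong\sE^\vee(mD)$. Since $j<\dim X$, the degree $n-j$ is strictly positive, so Serre vanishing applied to the fixed coherent sheaf $\sE^\vee\otimes\omega_X^\circ$ twisted by the ample $\sL^{m}$ yields $H^{n-j}(X,\sE^\vee\otimes\omega_X^\circ\otimes\sL^{m})=0$ for $m\gg 0$, and hence $H^j(X,\sE(-mD))=0$ for all sufficiently large $m$.

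Next I would set up the descent in $m$. Tensoring the ideal-sheaf sequence $0\to\sO_X(-D)\to\sO_X\to\sO_D\to 0$ with the locally free (hence flat) sheaf $\sE(-mD)$ produces
\[
0\to\sE(-(m+1)D)\to\sE(-mD)\to(\sE(-mD))\resto D\to 0 ,
\]
whose long exact cohomology sequence contains the segment
\[
H^j\bigl(X,\sE(-(m+1)D)\bigr)\to H^j\bigl(X,\sE(-mD)\bigr)\to H^j\bigl(D,(\sE(-mD))\resto D\bigr).
\]
The hypothesis forces the right-hand term to vanish for every $m\geq m_0$, so the left-hand map is surjective for all such $m$.

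Finally, I would fix an arbitrary $m\geq m_0$ and choose $M>m$ large enough that $H^j(X,\sE(-MD))=0$ by the base case. Composing the surjections of the previous step along the indices $M,M-1,\dots,m$ (each of which is $\geq m_0$, so each map is surjective) yields a surjection $H^j(X,\sE(-MD))\onto H^j(X,\sE(-mD))$; since the source is zero, so is the target. As $m\geq m_0$ was arbitrary, this proves the claim. I expect the only genuinely delicate point to be the base case: it is exactly there that the equidimensional \CM hypothesis enters, converting the negative-twist vanishing of $H^j(X,\sE(-mD))$ into an ordinary positive-twist Serre vanishing statement via duality. Everything else is the formal bookkeeping of the long exact sequence.
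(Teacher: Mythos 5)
Your proof is correct and is essentially the paper's own argument: both use the restriction short exact sequence to get surjections $H^j(X,\sE(-(m+1)D))\onto H^j(X,\sE(-mD))$ for $m\geq m_0$, compose them, and kill the source via Serre duality on the equidimensional \CM scheme $X$ combined with Serre vanishing. The only difference is presentational --- you establish the large-twist vanishing first and descend, while the paper iterates the surjections first and then invokes the vanishing --- and your explicit dualized formula $H^{n-j}(X,\sE^\vee\otimes\omega_X^\circ\otimes\sL^{m})^\vee$ just spells out what the paper compresses into its citation of Serre duality.
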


\begin{proof}
  Consider the following \ses:
  \[
  \xymatrix{%
    0\ar[r] & \sE(-(m+1)D)\ar[r] & \sE(-mD) \ar[r] &  
    (\sE(-mD))\resto D \ar[r] &  0.
  }
  \]
  It follows from the assumption that the induced morphism
  \[
  \xymatrix{%
    H^j(X,\sE(-(m+1)D))\ar@{->>}[r] & H^j(X,\sE(-mD)) }
  \]
  is surjective for each $m\geq m_0$. By iterating this step we obtain that the
  induced morphism
  \[
  \xymatrix{%
    H^j(X,\sE(-(m+l)D))\ar@{->>}[r] & H^j(X,\sE(-mD)) }
  \]
  is surjective for any $l\in\bN$. However, $H^j(X,\sE(-(m+l)D))=0$ for
  $l\gg 0$ by Serre duality \cite[7.6(b)]{Hartshorne77} which implies the desired
  statement.
\end{proof}

\begin{cor}\label{cor:KV-for-anti-canonical}
  Let $X$ be an equidimensional projective Gorenstein scheme over $k$ and
  $D\subseteq X$ an effective, ample Cartier divisor such that
  $\omega_D\simeq \sO_D$.  Assume that if $\kar k>0$ then $D$ is \Fsp and if
  $\kar k=0$ then $X$ has rational \sings.  Then
  \[
  \xymatrix{%
    H^j(X,\omega_X(-mD))= 0 }
  \]
  for $j<\dim X-1$ and each $m\in\bN$.
\end{cor}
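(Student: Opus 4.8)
The plan is to reduce, in either characteristic, to a Kodaira-type vanishing on $D$ and then propagate it to $X$ by the already-proved \autoref{lem:vanishing-from-divisor-to-ambient}. Put $n=\dim X$ and $\sL=\sO_X(D)$. Since $X$ is Gorenstein, $\omega_X$ is a line bundle, and since $D$ is an effective Cartier divisor in a Gorenstein scheme it is itself Gorenstein, hence \CM, and equidimensional of dimension $n-1$. By adjunction $\omega_D\simeq\left(\omega_X\otimes\sL\right)\resto D$, so the hypothesis $\omega_D\simeq\sO_D$ forces $\omega_X\resto D\simeq\left(\sL\resto D\right)^{-1}$ and therefore
\[
\left(\omega_X(-mD)\right)\resto D\simeq\left(\sL\resto D\right)^{-(m+1)}.
\]
Taking $\sE=\omega_X$ (a locally free sheaf) and $m_0=0$ in \autoref{lem:vanishing-from-divisor-to-ambient}, the corollary reduces to showing
\[
H^j\!\left(D,\left(\sL\resto D\right)^{-s}\right)=0\qquad\text{for all } s\ge 1 \text{ and } j<n-1=\dim D,
\]
that is, to Kodaira vanishing on $D$ for the ample line bundle $\sL\resto D$.

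In positive characteristic this is immediate. The divisor $D$ is equidimensional, projective, \CM, and, by hypothesis, \Fsp, so \autoref{thm:kod-van-for-Fsp} applied to $D$ with the ample line bundle $\left(\sL\resto D\right)^{s}$ yields exactly the required vanishing, and the Lemma then finishes the proof with no further work.

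The characteristic-zero case carries the real content, since $D$ may be singular and no Frobenius splitting is available. Here I would first establish Kodaira vanishing on the ambient $X$: as $X$ has rational \sings, a resolution $\pi\colon\tld X\to X$ satisfies $\myR\pi_*\sO_{\tld X}\simeq\sO_X$, and $\pi^*\sL$ is nef and big, so Kawamata--Viehweg vanishing \cite{Kawamata82c,Viehweg82b} on $\tld X$, together with Serre duality \cite[7.6(b)]{Hartshorne77} and the projection formula, gives $H^i(X,\sO_X(-sD))=0$ for $i<n$ and every $s\ge 1$. I would then transfer this downstairs to $D$ by running the mechanism of \autoref{lem:vanishing-from-divisor-to-ambient} in the opposite direction: the restriction sequence
\[
\xymatrix{
0\ar[r] & \sO_X(-(s+1)D)\ar[r] & \sO_X(-sD)\ar[r] & \left(\sL\resto D\right)^{-s}\ar[r] & 0
}
\]
sandwiches $H^j\!\left(D,\left(\sL\resto D\right)^{-s}\right)$ between $H^j(X,\sO_X(-sD))$ and $H^{j+1}(X,\sO_X(-(s+1)D))$, and both of these vanish for $j<n-1$ by the ambient vanishing just obtained. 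This produces the same vanishing on $D$ as in the positive-characteristic case, and the Lemma again concludes.

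The main obstacle is precisely this middle step in characteristic zero: obtaining the sharp vanishing on a possibly singular $D$ with no splitting at hand. The point of the argument above is to avoid treating $D$ intrinsically, which would seem to require controlling the singularities of $D$ itself, and instead to deduce the vanishing on $D$ from Kodaira vanishing upstairs on $X$, available because $X$ has rational \sings. I would stress that the ``easy-sign'' ambient vanishings $H^i(X,\sO_X(-sD))=0$ used here are logically independent of the ``hard-sign'' conclusion $H^j(X,\omega_X(-mD))=0$ that the Lemma ultimately extracts, so the two applications of the restriction-sequence technique do not create any circularity.
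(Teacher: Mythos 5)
Your proof is correct, and its skeleton matches the paper's: reduce by adjunction to the vanishing $H^j\bigl(D,(\sL\resto D)^{-s}\bigr)=0$ for $s\geq 1$ and $j<\dim D$, then feed that into \autoref{lem:vanishing-from-divisor-to-ambient} with $\sE=\omega_X$ and $m_0=0$; in positive characteristic the two arguments are identical, both invoking \autoref{thm:kod-van-for-Fsp} on the \Fsp scheme $D$. The difference is in characteristic zero: the paper simply cites Kawamata--Viehweg vanishing \cite{Kawamata82c,Viehweg82b} for the vanishing on $D$, whereas you derive it from vanishing on the ambient $X$ --- take a resolution $\pi\colon\tld X\to X$, use $\myR\pi_*\sO_{\tld X}\simeq\sO_X$ (rational \sings), apply Kawamata--Viehweg on $\tld X$ to the nef and big line bundle $\pi^*\sL$, and then transfer to $D$ via the restriction sequence. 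Your version is in fact the more careful one: $D$ is only assumed Gorenstein and may be singular, so the cited vanishing theorems, stated for smooth varieties, do not literally apply to $D$; the rational-singularities hypothesis on $X$, which is what the corollary actually assumes, enters precisely through the detour you take. So your route buys a rigorous justification of the one step the paper leaves terse (at the cost of a slightly longer argument), while the paper's phrasing is what one would write if $D$ were known to be smooth or to have rational singularities --- as it happens to be in all of the paper's applications of this corollary. Your closing remark about non-circularity is also correct: the ambient vanishings you use concern $\sO_X(-sD)$, not $\omega_X(-mD)$, so nothing is assumed that the lemma is later asked to produce.
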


% \begin{rem}
%   \autoref{cor:KV-for-anti-canonical} will be improved in
%   \autoref{thm:F-split-anti-canonical-implies-ACM}, but the proof of that uses
%   \autoref{thm:omega-is-minus-A} whose proof in turn uses this result.
% \end{rem}

\begin{proof}
  First note that by the adjunction formula the assumption implies that
  $\omega_X(D)\resto D\simeq \sO_D$.  Fix a $j<\dim X-1=\dim D$.  Then
  \begin{multline*}
    H^{j}(D,\omega_X(-mD)\resto D)\simeq H^{j}(D,\left(\omega_X(D)\otimes
      \sO_X(-(m+1)D)\right)\resto D)\simeq \\ \simeq H^{j}(D,\sO_D(-(m+1)D\resto D))
    =0
  \end{multline*}
  for each $m\in \bN$ by Kawamata-Viehweg vanishing \cite{Kawamata82c,Viehweg82b} in
  case $\kar k=0$ and by \autoref{thm:kod-van-for-Fsp} if $\kar k>0$. Hence the
  statement follows from \autoref{lem:vanishing-from-divisor-to-ambient} by taking
  $\sE=\omega_X$ and $m_0=0$.
\end{proof}

\begin{thm}\label{thm:omega-is-minus-A}
  Let $X$ be an equidimensional projective Gorenstein scheme over $k$ and
  $D\subseteq X$ an effective, ample Cartier divisor such that
  $\omega_D\simeq \sO_D$.  Assume that $\dim X\geq 3$ and if $\kar k>0$ then assume
  further that $D$ is \Fsp. Then $\omega_X\simeq \sO_X(-D)$.
\end{thm}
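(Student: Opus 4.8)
The plan is to prove the equivalent statement that the line bundle $\sL:=\omega_X(D)$ is trivial, and to produce a trivialization by lifting a section from $D$ to $X$. First I would invoke the adjunction formula, exactly as at the start of the proof of \autoref{cor:KV-for-anti-canonical}, to rewrite the hypothesis $\omega_D\simeq\sO_D$ as $\sL\resto D=\omega_X(D)\resto D\simeq\sO_D$. Thus $\sL$ is already trivial along $D$, and the whole problem becomes one of propagating this triviality from the ample divisor $D$ to all of $X$.

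Next I would tensor the structure sequence $0\to\sO_X(-D)\to\sO_X\to\sO_D\to 0$ by $\sL=\omega_X(D)$ and use $\sL\resto D\simeq\sO_D$ to identify the cokernel, obtaining the \ses
\[
0 \to \omega_X \to \omega_X(D) \to \sO_D \to 0.
\]
The associated long exact cohomology sequence contains
\[
H^0(X,\omega_X(D)) \to H^0(D,\sO_D) \xrightarrow{\ \delta\ } H^1(X,\omega_X).
\]
The single essential input is the vanishing $H^1(X,\omega_X)=0$: this is precisely \autoref{cor:KV-for-anti-canonical} applied with $m=0$ and $j=1$, which is permitted because the hypothesis $\dim X\geq 3$ guarantees $1<\dim X-1$. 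This is the only place where $\dim X\geq 3$, and the standing assumption that $D$ is \Fsp in positive characteristic, enter. Consequently the restriction map $H^0(X,\omega_X(D))\to H^0(D,\sO_D)$ is surjective, so the nowhere-vanishing section $1\in H^0(D,\sO_D)$ lifts to a global section $s\in H^0(X,\omega_X(D))$ with $s\resto D$ nowhere zero.

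Finally I would argue that $s$ itself has no zeros, so that it trivializes $\omega_X(D)$. The effective Cartier divisor $E:=Z(s)\in|\omega_X(D)|$ satisfies $\supp E\cap D=\emptyset$, since $s\resto D$ is a unit. But $D$ is ample and $\dim X\geq 2$, so every nonzero effective divisor must meet $D$: if $E\neq 0$ then $\supp E$ (of dimension $\dim X-1\geq 2$) contains a complete integral curve $C$, and ampleness forces $\deg_C\sO_X(D)>0$, hence $C\cap D\neq\emptyset$, contradicting $C\subseteq\supp E$. Therefore $E=0$, the section $s$ is nowhere vanishing, and $\omega_X(D)\simeq\sO_X$, i.e.\ $\omega_X\simeq\sO_X(-D)$. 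The only genuine obstacle is the cohomology vanishing $H^1(X,\omega_X)=0$, which has already been isolated in \autoref{cor:KV-for-anti-canonical}; everything else is the standard principle that an ample divisor meets every nonzero effective divisor.
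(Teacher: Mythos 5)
Your proposal is correct and is essentially the paper's own proof: the same adjunction sequence $0\to\omega_X\to\omega_X(D)\to\sO_D\to 0$, the same vanishing $H^1(X,\omega_X)=0$ from \autoref{cor:KV-for-anti-canonical}, and the same lifting of a nowhere-vanishing section followed by the ampleness argument forcing $(s=0)=\emptyset$. The only difference is that you spell out the curve-degree argument for that last step, which the paper leaves implicit.
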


\begin{proof}
  Consider the following \ses:
  \begin{equation*}
    %\label{eq:3}
    \xymatrix{%
      0\ar[r] & \omega_X \ar[r] & \omega_X(D) \ar[r] & \omega_D\simeq \sO_D \ar[r] &
      0. 
    }
  \end{equation*}
  Observe that $H^j(X,\omega_X)=0$ for $j<\dim X-1$ by
  \autoref{cor:KV-for-anti-canonical}. In particular this holds for $j=1$, i.e.,
  $H^1(X,\omega_X)=0$, and so the induced morphism
  \[
  \xymatrix{%
    H^0(X,\omega_X(D)) \ar@{->>}[r] & H^0(D,\omega_D)\simeq H^0(D,\sO_D)
  }
  \]
  is surjective. It follows that there exists a section
  $0\neq s\in H^0(X,\omega_X(D))$ such that $(s=0)\cap D=\emptyset$.  Because $D$ is
  ample, this implies that $(s=0)=\emptyset$ and hence that
  $\omega_X(D)\simeq \sO_X$. This proves the statement.
\end{proof}

\section{Lifting Frobenius splittings}
\noin
The following is a simple criterion for Frobenius splitting, probably well-known to
experts. A proof is included for the convenience of the reader.

\begin{prop}\label{prop:crit-for-F-split}
  Let $X$ be a projective \CM scheme of equidimension $n$ over $k$ of $\kar
  k>0$.
  Then $X$ is \Fsp if and only if there exists a morphism
  $\sigma:\omega_X\to F_*\omega_X$ such that the induced morphism
  \[
  \xymatrix{%
    H^n(\sigma): H^n(X,\omega_X) \ar[r]^-{\neq 0} & H^n(X,F_*\omega_X) }
  \]
  is non-zero. Any morphism $\sigma$ satisfying the above criterion will be a called
  a \emph{dual splitting morphism of $X$}.
\end{prop}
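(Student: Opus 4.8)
The plan is to derive both implications from Grothendieck--Serre duality for the finite morphism $F$, converting the splitting of $\eta\colon\sO_X\to F_*\sO_X$ into a statement about a dual map on the dualizing sheaf that can be tested on top cohomology. Throughout I take $X$ connected, so that $H^0(X,\sO_X)=k$. First I would assemble the duality package. Since $X$ is projective, \CM and of pure dimension $n$, its dualizing complex is $\omega_X^{\bullet}=\omega_X[n]$ with $\omega_X$ an honest coherent sheaf, and the contravariant functor $\mathbb{D}(-)\coloneqq R\sHom_{\sO_X}(-,\omega_X^{\bullet})$ is a duality (an involution) on $D^b_{\coh}(X)$ with $\mathbb{D}(\sO_X)\cong\omega_X$. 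Grothendieck duality for the finite morphism $F$ gives $\sHom_{\sO_X}(F_*\sO_X,\omega_X)\cong F_*\omega_X$, i.e. $\mathbb{D}(F_*\sO_X)\cong F_*\omega_X$ and, dually, $\mathbb{D}(F_*\omega_X)\cong F_*\sO_X$. I set $\Tr\coloneqq\mathbb{D}(\eta)\colon F_*\omega_X\to\omega_X$. Because $\mathbb{D}$ is an equivalence, $\Hom(\omega_X,\omega_X)\cong\Hom(\sO_X,\sO_X)=k$, and global Serre duality supplies a trace isomorphism $H^n(X,\omega_X)\cong k\neq 0$ together with the compatibility that $H^n\bigl(\mathbb{D}(\phi)\bigr)$ is the $k$-linear transpose of $H^0(\phi)$.

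For the forward implication, suppose $X$ is \Fsp, with $\eta'\colon F_*\sO_X\to\sO_X$ satisfying $\eta'\circ\eta=\id_{\sO_X}$. Put $\sigma\coloneqq\mathbb{D}(\eta')\colon\omega_X\to F_*\omega_X$. Applying the involution $\mathbb{D}$ to $\eta'\circ\eta=\id_{\sO_X}$ and using $\mathbb{D}(\id_{\sO_X})=\id_{\omega_X}$ gives $\Tr\circ\sigma=\id_{\omega_X}$. Passing to $H^n$ yields $H^n(\Tr)\circ H^n(\sigma)=\id_{H^n(X,\omega_X)}$, and since $H^n(X,\omega_X)\neq 0$ this forces $H^n(\sigma)\neq 0$, so $\sigma$ is the desired dual splitting morphism.

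The converse is the heart of the matter. Given $\sigma\colon\omega_X\to F_*\omega_X$ with $H^n(\sigma)\neq 0$, set $\eta'\coloneqq\mathbb{D}(\sigma)\colon F_*\sO_X\to\sO_X$ and $c\coloneqq\eta'\circ\eta\in\Hom(\sO_X,\sO_X)=k$. Applying $\mathbb{D}$ gives $\mathbb{D}(c)=\mathbb{D}(\eta)\circ\mathbb{D}(\eta')=\Tr\circ\sigma$, and since $c$ is a scalar, $\mathbb{D}(c)=c\cdot\id_{\omega_X}$; thus $\Tr\circ\sigma=c\cdot\id_{\omega_X}$ as an element of $\Hom(\omega_X,\omega_X)=k$. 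Taking $H^n$ I obtain
\[
H^n(\Tr)\circ H^n(\sigma)=c\cdot\id_{H^n(X,\omega_X)}.
\]
The key point is that $H^n(\Tr)$ is an isomorphism: it is the Serre transpose of $H^0(\eta)\colon H^0(X,\sO_X)\to H^0(X,F_*\sO_X)$, which under the identification $H^0(X,F_*\sO_X)=H^0(X,\sO_X)$ is the $p$-th power map (the comorphism of \autoref{notation}); as $X$ is connected and $k$ is algebraically closed, $H^0(X,\sO_X)=k$ and Frobenius on $k$ is bijective. Since $H^n(\sigma)\neq 0$ and $H^n(\Tr)$ is an isomorphism, the left-hand side is nonzero, hence $c\neq 0$. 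Then $c\in k^{\times}$, and $c^{-1}\eta'$ is a left inverse of $\eta$, so $X$ is \Fsp.

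The main obstacle is packaging the duality correctly: verifying that $\mathbb{D}$ is a duality exchanging $\sO_X\leftrightarrow\omega_X$ and $F_*\sO_X\leftrightarrow F_*\omega_X$ (Grothendieck duality for the finite morphism $F$, resting on $F^{!}\omega_X^{\bullet}\cong\omega_X^{\bullet}$), and that $H^n(\Tr)$ is an isomorphism. Once these are in place the argument is the short computation above; I emphasize that the identity $\Tr\circ\sigma=c\cdot\id_{\omega_X}$, together with $\Hom(\omega_X,\omega_X)=k$, is exactly what lets one detect the splitting on the one--dimensional space $H^n(X,\omega_X)$ without ever assuming $\omega_X$ is invertible.
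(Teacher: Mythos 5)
Your proof is correct and is essentially the paper's own argument: in both, the forward direction takes $\sigma$ to be the Grothendieck dual of the splitting $\eta'$, and the converse dualizes $\sigma$ to a candidate splitting $\eta'$, uses Serre duality to see that the trace $\Tr\colon F_*\omega_X\to\omega_X$ induces an isomorphism on $H^n$, concludes that $\eta'\circ\eta$ is nonzero, and rescales. The only difference is bookkeeping: you record $\eta'\circ\eta$ as a scalar $c$ and identify $H^n(\Tr)$ as the Serre transpose of Frobenius acting on $H^0(X,\sO_X)=k$, whereas the paper reaches the same conclusions from the observation that $H^n(X,\omega_X)$ and $H^n(X,F_*\omega_X)$ are one-dimensional --- note that both versions tacitly normalize $H^0(X,\sO_X)=k$ (i.e.\ $X$ connected and reduced), so your explicit connectedness hypothesis is no loss relative to the paper.
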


\begin{rem}
  Note that an important feature of this criterion is that there is no assumption of
  functoriality or any other constraints on $\sigma$, only that $H^n(\sigma)\neq 0$.
\end{rem}

\begin{proof}
  If $X$ is \Fsp, then letting $\sigma$ be the Grothendieck dual of the splitting
  morphism $\eta':F_*\sO_X\to \sO_X$ given by the definition %of being \Fsp
  shows the ``only if'' part of the claim.

  To show the other direction, first notice that since both $H^n(X,\omega_X)$ and
  $H^n(X,F_*\omega_X)$ are $1$-dimensional, the assumption is equivalent to saying
  that $H^n(\sigma)$ is an isomorphism.

  Next observe that by Serre duality the morphism
  \[
  H^n(\tau): H^n(X,F_*\omega_X)\to H^n(X,\omega_X)
  \]
  induced by the Grothendieck trace map $\tau: F_*\omega_X\to \omega_X$ is also
  non-zero, and again, since both $H^n(X,\omega_X)$ and $H^n(X,F_*\omega_X)$ are
  $1$-dimensional, it is an isomorphism.

  It follows that the composition $\tau\circ\sigma:\omega_X\to \omega_X$, which
  factors through $F_*\omega_X$, induces an isomorphism on $H^n(X,\omega_X)$. In
  particular, $\tau\circ\sigma\neq 0$.  Now let $\eta:\sO_X\to F_*\sO_X$ be the
  comorphism of the Frobenius, which is of course the Grothendieck dual of $\tau$ and
  let $\eta': F_*\sO_X\to \sO_X$ be the Grothendieck dual of $\sigma$. Then we see
  that $\eta'\circ\eta:\sO_X\to \sO_X$ cannot be zero, since otherwise so would be
  its Grothendieck dual, $\tau\circ\sigma$. However, if $\eta'\circ\eta\neq 0$, then
  it must be an isomorphism. Replacing $\eta'$ with itself composed with the inverse
  of this isomorphism we obtain that $X$ is \Fsp.
\end{proof}

\begin{defini}\label{def:dual-spl-morph-lifted}
  Let $X$ be an equidimensional projective \CM scheme over $k$ of $\kar k>0$ and
  $D\subseteq X$ a non-empty effective Cartier divisor. Assume that $D$ is \Fsp and
  let $\alpha:\omega_X(D)\to \omega_D$ denote the adjunction morphism. Then we have
  the following diagram:
  \begin{equation}
    \label{eq:1}
    \begin{aligned}
      \xymatrix@C=3em{%
        \omega_X(D) \ar@{..>}[d]_?^{\lambda} \ar[r]^-\alpha & \omega_D \ar[d]^\sigma \\
        F_*\left(\omega_X(D)\right) \ar[r]^-{F_*\alpha} & F_*\omega_D. \\
      }
    \end{aligned}
  \end{equation}
  Here $\sigma:\omega_D\to F_*\omega_D$ is a dual splitting morphism of $D$ provided
  by \autoref{prop:crit-for-F-split}. Note that the morphism $\lambda$ does not
  always exist. If a morphism $\lambda$ making the diagram commutative does exist
  then we will say that the \emph{dual splitting morphism $\sigma$ can be lifted to
    $X$}.
\end{defini}

\begin{thm}\label{thm:lifted-sigma-implies-F-split}
  Let $X$ be an equidimensional projective \CM scheme over $k$ of $\kar k>0$ and
  $D\subseteq X$ an effective (non-empty) Cartier divisor. Assume that $D$ is \Fsp
  and that $\sigma:\omega_D\to F_*\omega_D$, a dual splitting morphism of $D$, can be
  lifted to $X$ (cf.~\autoref{def:dual-spl-morph-lifted}). Then $X$ is also \Fsp.
\end{thm}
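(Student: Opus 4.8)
The plan is to use the criterion for Frobenius splitting established in \autoref{prop:crit-for-F-split}: to show that $X$ is \Fsp it suffices to produce a morphism $\lambda_X\colon\omega_X\to F_*\omega_X$ inducing a nonzero map on $H^n(X,\omega_X)$, where $n=\dim X$. The lifted morphism $\lambda\colon\omega_X(D)\to F_*(\omega_X(D))$ supplied by the hypothesis is the natural candidate, but it is a morphism between the \emph{twisted} sheaves $\omega_X(D)$, not between $\omega_X$ and $F_*\omega_X$. So the first task is to produce from $\lambda$ an honest dual splitting morphism of $X$ in the sense required by \autoref{prop:crit-for-F-split}, and then to verify the nonvanishing of its induced map on top cohomology.

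First I would set $n=\dim X=\dim D+1$ and consider the commutative square \eqref{eq:1}. Since $D$ is \Fsp, by \autoref{prop:crit-for-F-split} the chosen dual splitting morphism $\sigma\colon\omega_D\to F_*\omega_D$ induces an isomorphism on $H^{n-1}(D,\omega_D)$. The strategy is to compare the long exact cohomology sequences of the short exact sequence
\[
\xymatrix{
0\ar[r] & \omega_X \ar[r] & \omega_X(D) \ar[r]^-\alpha & \omega_D \ar[r] & 0
}
\]
with the corresponding Frobenius-pushforward sequence, linked vertically by $\lambda$, $\lambda$, and $\sigma$ (the latter being the hypothesis that the diagram commutes). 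By the snake-style diagram chase on the connecting homomorphisms $\delta\colon H^{n-1}(D,\omega_D)\to H^n(X,\omega_X)$, I would deduce that $\lambda$ restricts to a morphism $\lambda_X\colon\omega_X\to F_*\omega_X$ on the kernel subsheaves, and that the square
\[
\xymatrix{
H^{n-1}(D,\omega_D)\ar[r]^-{\delta}\ar[d]_{H^{n-1}(\sigma)} & H^n(X,\omega_X)\ar[d]^{H^n(\lambda_X)} \\
H^{n-1}(D,F_*\omega_D)\ar[r]^-{\delta'} & H^n(X,F_*\omega_X)
}
\]
commutes. The key observation is that $\delta$ is surjective: since $\omega_X(D)\resto D\simeq\sO_D$ is the trivial bundle (by adjunction and $\omega_D\simeq\sO_D$), one has $H^n(X,\omega_X(D))=0$ for $n-1\geq 1$ by the vanishing in \autoref{cor:KV-for-anti-canonical} (or directly by duality), forcing $\delta$ to hit all of the one-dimensional space $H^n(X,\omega_X)$.

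Granting surjectivity of $\delta$, the argument concludes quickly: pick $0\neq\xi\in H^{n-1}(D,\omega_D)$ with $\delta(\xi)\neq 0$ spanning $H^n(X,\omega_X)$. Since $H^{n-1}(\sigma)$ is an isomorphism, $H^{n-1}(\sigma)(\xi)\neq 0$, and chasing the commutative square gives $H^n(\lambda_X)(\delta(\xi))=\delta'(H^{n-1}(\sigma)(\xi))$. Provided this is nonzero, $H^n(\lambda_X)\neq 0$ and \autoref{prop:crit-for-F-split} yields that $X$ is \Fsp.

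\textbf{The main obstacle} is precisely controlling the Frobenius-twist bookkeeping in the bottom connecting map $\delta'$ and confirming $\delta'(H^{n-1}(\sigma)(\xi))\neq 0$. The subtlety is that $F_*$ does not commute naively with the twist by $D$: the bottom row of \eqref{eq:1} involves $F_*(\omega_X(D))$ rather than $(F_*\omega_X)(D)$, and one must use that $F_*(\sO_X(-pD))\hookrightarrow F_*\sO_X$ together with the projection formula to identify the relevant kernel as $F_*\omega_X$ and to check that the induced connecting map $\delta'$ carries the class $H^{n-1}(\sigma)(\xi)$ to a nonzero element of the one-dimensional space $H^n(X,F_*\omega_X)$. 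I would handle this by the same duality/one-dimensionality trick used in \autoref{prop:crit-for-F-split}: both $H^n(X,\omega_X)$ and $H^n(X,F_*\omega_X)$ are one-dimensional, and the Grothendieck trace provides an isomorphism between them compatible with the vertical maps, so that nonvanishing on the $D$-level propagates to nonvanishing on the $X$-level. Once this compatibility is pinned down, the diagram chase closes and the theorem follows.
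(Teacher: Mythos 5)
Your skeleton is the same as the paper's: restrict the lifting $\lambda$ to the kernel subsheaves to get $\lambda_X\colon\omega_X\to F_*\omega_X$ (the paper's $\sigma_X$), compare the two long exact sequences via the commutative ladder, and feed the resulting nonvanishing into \autoref{prop:crit-for-F-split}. But two points need repair. First, your primary justification of $H^n(X,\omega_X(D))=0$ imports hypotheses this theorem does not have: \autoref{cor:KV-for-anti-canonical} requires $D$ ample, and the identification $\omega_X(D)\resto{D}\simeq\sO_D$ requires $\omega_D\simeq\sO_D$, whereas here $D$ is merely an effective non-empty Cartier divisor on a \CM scheme (and in any case that corollary addresses twists $\omega_X(-mD)$, $m\geq 0$, in degrees $j<\dim X-1$, not degree $n$). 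Only your parenthetical fallback is valid, and it is what the paper uses: Serre duality gives $H^n(X,\omega_X(D))\simeq H^0(X,\sO_X(-D))^\vee=0$.

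Second, and more seriously, the step you yourself flag as the main obstacle is never closed, and the route you propose for it is both unproved and unnecessary. A Grothendieck trace isomorphism ``compatible with'' the connecting maps of the adjunction sequences is a genuine residue-type compatibility theorem, not something one may simply assert. The paper's resolution is much simpler: the Frobenius is affine (indeed the identity on the underlying space), so $F_*$ is exact --- this alone identifies $\ker(F_*\alpha)$ with $F_*\omega_X$, with no need for the projection formula or $F_*(\sO_X(-pD))$ --- and moreover $H^i(X,F_*\sG)\simeq H^i(X,\sG)$ for every quasi-coherent $\sG$. Hence $H^n(X,F_*(\omega_X(D)))\simeq H^n(X,\omega_X(D))=0$, so the bottom connecting map $\delta'$ is surjective for exactly the same reason as $\delta$; since its source $H^{n-1}(D,F_*\omega_D)$ and target $H^n(X,F_*\omega_X)$ are one-dimensional (again because $F_*$ preserves cohomology, combined with Serre duality as in \autoref{prop:crit-for-F-split}), $\delta'$ is an isomorphism. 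Then $H^n(\lambda_X)\circ\delta=\delta'\circ H^{n-1}(\sigma)\neq 0$, and \autoref{prop:crit-for-F-split} finishes the proof. With these two substitutions your argument becomes precisely the paper's proof.
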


\begin{proof}
  I will use the notation of \autoref{def:dual-spl-morph-lifted}.  Let
  $\lambda:\omega_X(D)\to F_*\left(\omega_X(D)\right)$ be a lifting of $\sigma$,
  i.e., such that $\lambda$ completes \autoref{eq:1} to a commutative diagram.  Note
  that $\ker\alpha\simeq \omega_X$ and hence setting
  $\sigma_X\leteq \lambda|_{\omega_{\tiny X}}$ we have the following commutative
  diagram of \sess:
  \begin{equation*}
    %\label{eq:2}
    \begin{aligned}
      \xymatrix{%
        0 \ar[r] & \omega_X \ar@{..>}[d]^{\sigma_X}
        \ar[r] & \omega_X(D) \ar[r] \ar[d]^\lambda & \omega_D \ar[r] \ar[d]^\sigma & 0 \\
        0 \ar[r] & F_*\omega_X \ar[r] & F_*\left(\omega_X(D)\right) \ar[r] &
        F_*\omega_D \ar[r] & 0. }
    \end{aligned}
  \end{equation*}
  Let $n\leteq \dim X$ and consider the induced diagram of long exact sequences of
  cohomology:
  \[
  \xymatrix@C4ex{%
    \dots \ar[r] & H^{n-1}(D,\omega_D) \ar[d]_{H^{n-1}(\sigma)} \ar[r] &
    H^{n}(X,\omega_X) \ar[d]^{H^n(\sigma_X)}\ar[r] & H^{n}(X,\omega_X(D))
    % \ar[r] & \dots 
    \\
    \dots \ar[r] & H^{n-1}(D,F_*\omega_D) \ar[r] & H^{n}(X,F_*\omega_X) \ar[r] &
    H^{n}(X,F_*\left(\omega_X(D)\right)) % \ar[r] & \dots 
  }
  \]
  Observe that $H^{n}(X,F_*\left(\omega_X(D)\right))\simeq H^{n}(X,\omega_X(D))=0$ by
  Serre duality and hence we have a commutative square where the horizontal maps are
  isomorphisms:
  \[
  \xymatrix@C=3em{%
    H^{n-1}(D,\omega_D) \ar[d]_{H^{n-1}(\sigma)} \ar[r]^-\simeq & H^{n}(X,\omega_X)
    \ar[d]^{H^n(\sigma_X)} \\
    H^{n-1}(D,F_*\omega_D) \ar[r]^-\simeq & H^{n}(X,F_*\omega_X). }
  \]
  This implies that $H^n(\sigma_X)\neq 0$, hence $X$ is \Fsp by
  \autoref{prop:crit-for-F-split}.
\end{proof}

\begin{cor}\label{cor:F-split-lifts-from-anti-canonical}
  Let $X$ be an equidimensional projective Gorenstein scheme over $k$ of $\kar k>0$
  and $D\subseteq X$ an effective (non-empty) anti-canonical divisor, i.e., such that
  $\omega_X\simeq \sO_X(-D)$. If $D$ is \Fsp then so is $X$.
\end{cor}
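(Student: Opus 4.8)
The plan is to apply \autoref{thm:lifted-sigma-implies-F-split}: since $X$ is Gorenstein it is \CM, and the anti-canonical divisor $D$ is a non-empty effective Cartier divisor, so it suffices to exhibit \emph{some} dual splitting morphism of $D$ that lifts to $X$ in the sense of \autoref{def:dual-spl-morph-lifted}. First I would use the anti-canonical hypothesis to trivialize all the line bundles appearing in \autoref{eq:1}. From $\omega_X\simeq \sO_X(-D)$ we get $\omega_X(D)\simeq \sO_X$, and by adjunction $\omega_D\simeq \omega_X(D)\resto D\simeq \sO_D$. Fixing an isomorphism $\beta:\omega_X(D)\xrightarrow{\ \sim\ }\sO_X$ and letting $\gamma:\omega_D\xrightarrow{\ \sim\ }\sO_D$ be the isomorphism it induces on restriction to $D$, the adjunction morphism $\alpha:\omega_X(D)\to\omega_D$ becomes, under $\beta$ and $\gamma$, simply the restriction map $\sO_X\to\sO_D$.

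The key idea is to take the dual splitting morphism of $D$ to be the Frobenius comorphism itself. Concretely, under $\gamma$ let $\sigma:\omega_D\to F_*\omega_D$ correspond to $\eta_D:\sO_D\to F_*\sO_D$, $f\mapsto f^p$. I would check that this $\sigma$ satisfies the criterion of \autoref{prop:crit-for-F-split}, i.e.\ that $H^{\dim D}(\sigma)\neq 0$. Since $D$ is \Fsp, the map $\eta_D$ admits a left inverse, so $H^{\dim D}(\eta_D)$ admits a left inverse and is therefore nonzero provided $H^{\dim D}(D,\sO_D)\neq 0$; and the latter holds by Serre duality on the projective Gorenstein (hence \CM) scheme $D$, since $H^{\dim D}(D,\sO_D)$ is dual to $H^0(D,\omega_D)\simeq H^0(D,\sO_D)$, which contains $1$. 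As $\gamma$ is an isomorphism, $H^{\dim D}(\sigma)\neq 0$ as well, so $\sigma$ is a genuine dual splitting morphism of $D$.

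With this choice the lift is essentially automatic. I would take $\lambda:\omega_X(D)\to F_*\omega_X(D)$ to be the morphism corresponding, under $\beta$, to the Frobenius comorphism $\eta_X:\sO_X\to F_*\sO_X$. The commutativity of \autoref{eq:1} then reduces to the commutativity of the square formed by $\eta_X$, $\eta_D$, and the two restriction maps $\sO_X\to\sO_D$ and $F_*\sO_X\to F_*\sO_D$; this is exactly the naturality of Frobenius, namely $(f\resto D)^p=(f^p)\resto D$. Hence $\sigma$ lifts to $X$, and \autoref{thm:lifted-sigma-implies-F-split} yields that $X$ is \Fsp.

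The only genuinely delicate point is the bookkeeping: one must confirm that the chosen isomorphism $\beta:\omega_X(D)\simeq \sO_X$ restricts compatibly to $\gamma:\omega_D\simeq \sO_D$, so that $\alpha$ really becomes the restriction map and so that $\sigma$ and $\lambda$, defined through $\gamma$ and $\beta$, make \autoref{eq:1} commute. Everything else is formal. The conceptual point worth stressing is that one should \emph{not} attempt to lift an arbitrary splitting of $D$: that would produce an obstruction living in $H^1(X,\omega_X)$, which is not available here because $D$ is only assumed anti-canonical, not ample. Choosing the Frobenius comorphism as the dual splitting morphism sidesteps this obstruction entirely and makes the required lift tautological.
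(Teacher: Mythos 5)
Your proof is correct and is essentially the same as the paper's: both trivialize $\omega_X(D)$ and $\omega_D$ via the anti-canonical hypothesis, take the dual splitting morphism of $D$ to be the Frobenius comorphism $\eta_D$, lift it by the Frobenius comorphism $\eta_X$ (commutativity being naturality of Frobenius), and conclude by \autoref{thm:lifted-sigma-implies-F-split}. Your extra bookkeeping with $\beta$, $\gamma$ and the Serre-duality check that $H^{\dim D}(\eta_D)\neq 0$ only spells out what the paper leaves implicit.
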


\begin{proof}
  Since $D$ is an anti-canonical divisor it follows that $\omega_X(D)\simeq \sO_X$
  and $\omega_D\simeq \sO_D$, so the adjunction \ses
  \[
  \xymatrix{%
    0 \ar[r] & \omega_X \ar[r] & \omega_X(D) \ar[r] & \omega_D \ar[r] & 0 }
  \] 
  becomes
  \[
  \xymatrix{%
    0 \ar[r] & \sO_X(-D) \ar[r] & \sO_X \ar[r] & \sO_D \ar[r] & 0 }
  \]
  If $D$ is \Fsp, then the natural morphism $\eta_D:\sO_D\to F_*\sO_D$ splits and
  hence $H^n(\eta_D)\neq 0$. In other words, $\eta_D$ is a dual splitting morphism of
  $D$. The natural morphism $\eta_X:\sO_X\to F_*\sO_X$ is a lifting of $\eta_D$ to
  $X$ and hence $X$ is \Fsp by \autoref{thm:lifted-sigma-implies-F-split}.
\end{proof}

\begin{cor}\label{cor:KV-for-Fanos}
  Let $X$ be an equidimensional projective Gorenstein scheme over $k$ of $\kar k>0$
  and $D\subseteq X$ an effective (non-empty) anti-canonical divisor, i.e., such that
  $\omega_X\simeq \sO_X(-D)$. If $D$ is \Fsp then Kodaira vanishing holds on $X$,
  i.e., for any ample line bundle $\sL$ on $X$,
  \[
  \xymatrix{%
    H^j(X, \sL^{-1})=0 }
  \]
  for $j<\dim X$.
\end{cor}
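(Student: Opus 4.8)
The plan is to obtain this as an immediate two-step consequence of the results already in place, so that the proof reduces to checking that their hypotheses are satisfied in the present situation. The essential content — transporting Frobenius splitting from the anti-canonical divisor to the ambient scheme — has already been isolated in the lifting results, and Kodaira vanishing for \Fsp schemes has been recorded as \autoref{thm:kod-van-for-Fsp}; what remains is simply to chain these together.

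Concretely, I would first observe that since $X$ is Gorenstein it is in particular \CM, and it is assumed equidimensional and projective over $k$ with $\kar k>0$, so all the standing hypotheses of \autoref{thm:kod-van-for-Fsp} other than the splitting of $X$ are in hand. The splitting of $X$ is not assumed directly, but it follows: because $D\subseteq X$ is an effective anti-canonical divisor, i.e.\ $\omega_X\simeq\sO_X(-D)$, and $D$ is \Fsp by hypothesis, \autoref{cor:F-split-lifts-from-anti-canonical} yields at once that $X$ is \Fsp. With $X$ now known to be an equidimensional projective \CM scheme that is \Fsp, I would conclude by applying \autoref{thm:kod-van-for-Fsp} to an arbitrary ample line bundle $\sL$, giving $H^j(X,\sL^{-1})=0$ for all $j<\dim X$, which is precisely the claimed statement.

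There is really no analytic or cohomological obstacle to surmount here; the only thing demanding care is the bookkeeping of hypotheses, namely confirming that ``Gorenstein'' supplies the \CM condition required by \autoref{thm:kod-van-for-Fsp} and that the anti-canonical condition $\omega_X\simeq\sO_X(-D)$ is exactly the input consumed by \autoref{cor:F-split-lifts-from-anti-canonical}. Once those are matched up, no further computation is needed, so the genuine difficulty in this corollary lies entirely upstream, in the lifting theorem \autoref{thm:lifted-sigma-implies-F-split} and its specialization.
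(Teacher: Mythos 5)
Your proposal is correct and follows exactly the paper's own argument: the paper proves this corollary as ``a direct consequence of \autoref{cor:F-split-lifts-from-anti-canonical} and \autoref{thm:kod-van-for-Fsp},'' which is precisely your chain of first lifting the splitting from $D$ to $X$ and then invoking Mehta--Ramanathan vanishing on the now-\Fsp scheme $X$. Your additional bookkeeping (Gorenstein implies \CM, hypotheses of both cited results are met) is the right verification and matches what the paper leaves implicit.
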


\begin{proof}
  This is a direct consequence of \autoref{cor:F-split-lifts-from-anti-canonical} and
  \autoref{thm:kod-van-for-Fsp}. 
\end{proof}

\section{Frobenius split anti-canonical divisors on Fano varieties}
\noin
We are now ready to prove the main result.

\begin{thm}\label{thm:F-split-anti-canonical-implies-ACM}
  Let $X$ be an equidimensional projective Gorenstein scheme over $k$ and
  $D\subseteq X$ an effective, ample Cartier divisor such that
  $\omega_D\simeq \sO_D$.  Assume that $\dim X\geq 3$ and also that if $\kar k>0$
  then $D$ is \Fsp and if $\kar k=0$ then $X$ has rational \sings.  Then
  \begin{enumerate}
  \item\label{item:5} if $\kar k>0$ then $X$ is \Fsp,
  \item\label{item:7} Kodaira vanishing holds on $X$,
  \item\label{item:3} $\omega_X\simeq \sO_X(-D)$,
  \item\label{item:4} $H^i(X,\sO_X)=0$ for $i>0$, and
  \item\label{item:6} $H^j(D,\sO_D)=0$ for $0<j<\dim D$.
  \end{enumerate}
\end{thm}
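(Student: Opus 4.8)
The plan is to assemble the five assertions from the results already established, proving them in the logical order (iii), (i), (ii), (iv), (v) rather than in the order listed. The structural heart is (iii). Since $X$ is equidimensional projective Gorenstein, $D$ is an effective ample Cartier divisor with $\omega_D\simeq\sO_D$, $\dim X\geq 3$, and since in positive characteristic $D$ is \Fsp while in characteristic zero $X$ has rational \sings, the hypotheses of \autoref{thm:omega-is-minus-A} are met verbatim, and it yields $\omega_X\simeq\sO_X(-D)$. Thus $D$ is an anti-canonical divisor and $\sO_X(D)\simeq\omega_X^{-1}$ is ample. With (iii) in hand, (i) is immediate in positive characteristic: $D$ is \Fsp and anti-canonical, so \autoref{cor:F-split-lifts-from-anti-canonical} shows that $X$ is \Fsp.

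Next I would establish the full Kodaira vanishing (ii), that is, $H^j(X,\sL^{-1})=0$ for every ample $\sL$ and every $j<\dim X$. In positive characteristic this is exactly \autoref{cor:KV-for-Fanos} (equivalently \autoref{thm:kod-van-for-Fsp}, now that $X$ is \Fsp and Gorenstein, hence \CM). In characteristic zero I would argue through a resolution $\pi\colon Y\to X$: rational \sings together with Grauert-Riemenschneider vanishing give $R\pi_*\omega_Y\simeq\omega_X$, so by the projection formula $H^i(X,\omega_X\otimes\sL)\simeq H^i(Y,\omega_Y\otimes\pi^*\sL)$, and since $\pi^*\sL$ is nef and big this vanishes for $i>0$ by Kawamata-Viehweg vanishing \cite{Kawamata82c,Viehweg82b}; Serre duality \cite[7.6(b)]{Hartshorne77} on the \CM scheme $X$ then rewrites this as $H^j(X,\sL^{-1})=0$ for $j<\dim X$. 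I expect this characteristic-zero case to be the only step in the present argument requiring real input, and even it is standard---the substantive work having already been done in \autoref{thm:omega-is-minus-A} and \autoref{cor:F-split-lifts-from-anti-canonical}.

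The remaining two vanishings are then formal. Writing $n\leteq\dim X$ and applying Serre duality to the Gorenstein scheme $X$, one has $H^i(X,\sO_X)\simeq H^{n-i}(X,\omega_X)^\vee\simeq H^{n-i}(X,\sO_X(-D))^\vee$; for $i>0$ we have $n-i<n$, so (ii) applied to the ample bundle $\sO_X(D)$ forces the vanishing, proving (iv). Finally, for (v) I would feed (iii) into the adjunction sequence, which becomes
\[
0 \longrightarrow \omega_X \longrightarrow \sO_X \longrightarrow \sO_D \longrightarrow 0,
\]
and inspect the associated long exact sequence near degree $j$,
\[
H^j(X,\sO_X) \longrightarrow H^j(D,\sO_D) \longrightarrow H^{j+1}(X,\omega_X).
\]
For $0<j<\dim D=n-1$ the left term vanishes by (iv), while the right term equals $H^{j+1}(X,\sO_X(-D))$, which vanishes by (ii) since $j+1<n$; hence $H^j(D,\sO_D)=0$.

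The one subtlety worth flagging is that (iv) needs the vanishing of $H^j(X,\omega_X)$ all the way up through $j=n-1$, which is precisely the top degree that \autoref{cor:KV-for-anti-canonical} does not reach. Consequently it is the full strength of Kodaira vanishing proved in (ii), rather than \autoref{cor:KV-for-anti-canonical} alone, that makes the chain of implications close up.
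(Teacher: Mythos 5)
Your proof is correct and follows essentially the same route as the paper: (iii) via \autoref{thm:omega-is-minus-A}, then (i) via \autoref{cor:F-split-lifts-from-anti-canonical}, then (ii) via \autoref{thm:kod-van-for-Fsp} in positive characteristic and Kawamata--Viehweg in characteristic zero (which the paper simply cites as well-known and you spell out via a resolution), then (iv) by Serre duality and (v) by the adjunction sequence. Your closing observation --- that (iv) needs the vanishing $H^{n-1}(X,\omega_X)=0$, which lies just beyond the reach of \autoref{cor:KV-for-anti-canonical} and therefore requires the full strength of (ii) --- is accurate and matches the paper's logical structure exactly.
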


\begin{proof}
  \autoref{thm:omega-is-minus-A} implies \autoref{item:3}, which combined with
  \autoref{cor:F-split-lifts-from-anti-canonical} implies \autoref{item:5} and if
  $\kar k>0$ then that combined with \autoref{thm:kod-van-for-Fsp} implies
  \autoref{item:7}.  Of course, if $\kar k=0$ then \autoref{item:7} is well-known by
  Kawamata-Viehweg vanishing \cite{Kawamata82c,Viehweg82b}.  Since
  $\sO_X\simeq \omega_X\otimes \sO_X(D)$ and $D$ is ample, Serre duality and
  % \autoref{cor:KV-for-Fanos} (or
  \autoref{item:7} implies \autoref{item:4}. Finally,
  consider the \ses,
  \[
  \xymatrix{%
    0 \ar[r] & \sO_X(-D) \ar[r] & \sO_X \ar[r] & \sO_D \ar[r] & 0.
  }
  \]
  As $H^j(X,\sO_X(-D))=0$ for $j<\dim X$ by \autoref{item:7}, \autoref{item:6}
  follows from \autoref{item:4}.
\end{proof}

As a consequence of \autoref{thm:F-split-anti-canonical-implies-ACM} we will obtain
the generalization of the main result of \cite{MR0404703} to ordinary abelian
varieties in positive characteristic promised in the introduction.  Note that by
\cite[1.1]{MR916481} an abelian variety is ordinary if and only if it is \Fsp. In
particular, the methods of this paper do not say anything about what happens for
non-ordinary abelian varieties.

For the definition of ordinary varieties in general the reader is referred to
\cite{MR849653} although the definition of ordinariness will not be used directly.
We will only use the following properties, proved respectively by Illusie
\cite{MR1106904}, and Joshi and Rajan \cite{MR1936581}.

\begin{prop}\label{prop:facts-about-ordinary-varieties}
  Let $Z$ be an ordinary smooth projective variety over a field $k$ of positive
  characteristic. Then
  \begin{enumerate}
  \item\label{item:1} \cite[Prop.~1.2]{MR1106904} any small deformation of $Z$ is
    also ordinary, and
  \item\label{item:2} \cite[Prop.~3.1]{MR1936581} if in addition
    $\omega_Z\simeq \sO_Z$, then $Z$ is \Fsp.
  \end{enumerate}
\end{prop}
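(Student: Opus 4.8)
The plan is to treat the two parts separately, since they are independent in character: part~(i) is an openness (deformation-invariance) statement about ordinariness, whereas part~(ii) is a geometric implication for a single variety with trivial canonical bundle. In both cases the substantive input comes from the de Rham--Witt / Cartier-operator machinery, so my strategy is to reduce each assertion to one cohomological statement supplied by the cited works of Illusie and of Joshi--Rajan, rather than to reconstruct that machinery here. I would not attempt to use the definition of ordinariness directly, only its standard cohomological consequences.

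For part~(ii) the cleanest route goes through the dual criterion of \autoref{prop:crit-for-F-split}. Since $\omega_Z\simeq \sO_Z$, a candidate dual splitting morphism $\sigma\colon \omega_Z\to F_*\omega_Z$ is simply the Frobenius comorphism $\eta\colon \sO_Z\to F_*\sO_Z$ transported across this isomorphism. Under the identification $H^n(Z,F_*\sO_Z)\simeq H^n(Z,\sO_Z)$ coming from $F$ being the identity on the underlying space, the map $H^n(\eta)$ is precisely the $p$-linear Hasse--Witt (Frobenius) action on $H^n(Z,\sO_Z)$, where $n=\dim Z$. The only fact I would extract from ordinariness is that this Frobenius action on the coherent cohomology of $\sO_Z$ is bijective, in particular nonzero on $H^n(Z,\sO_Z)$; granting this, \autoref{prop:crit-for-F-split} immediately yields that $Z$ is \Fsp. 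Thus the real content of part~(ii) is the bijectivity of the Hasse--Witt operator for ordinary varieties, which is exactly what the definition of ordinariness is designed to guarantee.

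For part~(i) I would use the cohomological characterization of ordinariness: $Z$ is ordinary precisely when $H^j(Z,B\Omega^i_{Z})=0$ for all $i,j$, where $B\Omega^i_{Z}$ is the sheaf of locally exact $i$-forms (equivalently, when the Hodge and Newton polygons coincide in every degree). Given a smooth proper family $\cZ\to S$ with special fiber $Z$, these vanishings cut out an open subset of $S$ by upper semicontinuity of cohomology, so ordinariness of $Z$ propagates to every nearby fiber. The step requiring care, and the main obstacle, is that the sheaves $B\Omega^i$ must be shown to form a family whose cohomology is amenable to semicontinuity, i.e.\ their formation must be compatible with base change; this is the technical heart of Illusie's argument and is precisely why one invokes \cite[Prop.~1.2]{MR1106904} rather than redoing it. In summary, once these two reductions are in place the proposition follows by citing \cite[Prop.~1.2]{MR1106904} for part~(i) and \cite[Prop.~3.1]{MR1936581} for part~(ii); the genuinely difficult inputs, namely base-change control of the boundary sheaves $B\Omega^i$ and the de Rham--Witt input behind bijectivity of Hasse--Witt, live in those references, and the role here is only to package them in the form stated.
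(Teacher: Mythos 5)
The paper gives no proof of this proposition at all: it is stated as a pair of quoted results, with the citations to Illusie and to Joshi--Rajan built into the statement itself, so your plan of reducing both parts to those references is exactly the paper's approach. Your supplementary sketches are also sound --- for (ii), transporting the Frobenius comorphism across $\omega_Z\simeq\sO_Z$ and feeding the bijectivity of the Hasse--Witt action on $H^n(Z,\sO_Z)$ into \autoref{prop:crit-for-F-split} is a correct argument (essentially the one in \cite[Prop.~3.1]{MR1936581}), and for (i) you correctly identify base-change control of the sheaves of locally exact forms as the point that forces the appeal to \cite[Prop.~1.2]{MR1106904}.
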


\begin{cor}\label{cor:sommese}
  Let $A$ be an abelian variety of dimension at least $2$ over $k$.  If $\kar k>0$
  assume that $A$ is ordinary. Suppose $A$ is an ample divisor on $X$. Then $X$
  cannot be an equidimensional projective Gorenstein scheme if $\kar k>0$ and an
  equidimensional projective Gorenstein scheme with only rational singularities if
  $\kar k=0$.
\end{cor}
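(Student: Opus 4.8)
The plan is to argue by contradiction: suppose $X$ \emph{is} an equidimensional projective Gorenstein scheme (with only rational singularities if $\kar k=0$) containing $A$ as an effective ample (Cartier) divisor, and then derive a contradiction by feeding $D=A$ into \autoref{thm:F-split-anti-canonical-implies-ACM}. The first step is to record the two structural facts about an abelian variety that make this work: since the canonical bundle of an abelian variety is trivial we have $\omega_A\simeq \sO_A$, and since $\dim A\geq 2$ we get $\dim X=\dim A+1\geq 3$. These are exactly the numerical and canonical hypotheses required by the theorem.

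Next I would verify the remaining hypothesis, namely the positive-characteristic Frobenius splitting condition. In the case $\kar k=0$ there is nothing to check beyond the assumed rational singularities of $X$. In the case $\kar k>0$, the standing assumption is that $A$ is ordinary; combined with $\omega_A\simeq \sO_A$, \autoref{prop:facts-about-ordinary-varieties}\autoref{item:2} gives that $A$ is \Fsp. Thus in either characteristic all hypotheses of \autoref{thm:F-split-anti-canonical-implies-ACM} are satisfied with $D=A$.

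Applying that theorem, I would extract conclusion \autoref{item:6}, which asserts that $H^j(A,\sO_A)=0$ for all $0<j<\dim A$. This is where the contradiction appears: for an abelian variety one has $H^1(A,\sO_A)\neq 0$ (indeed $\dim_k H^j(A,\sO_A)=\binom{\dim A}{j}$, so $H^1(A,\sO_A)$ has dimension $\dim A$). Since $\dim A\geq 2$, the index $j=1$ lies strictly between $0$ and $\dim A$, so the vanishing from \autoref{item:6} directly contradicts the nonvanishing of $H^1(A,\sO_A)$. This contradiction shows no such $X$ can exist.

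I do not expect a serious obstacle here, as the heavy lifting has already been done in \autoref{thm:F-split-anti-canonical-implies-ACM}; the work is entirely in checking that the hypotheses of that theorem are met. The only points deserving care are bookkeeping ones: confirming that $A$ is a Cartier divisor (automatic here since $A$ is smooth and $X$ is Gorenstein along $A$), handling the two characteristics in parallel, and observing that the hypothesis $\dim A\geq 2$ is precisely what guarantees an index $j$ with $0<j<\dim A$ exists so that \autoref{item:6} has content. The role of ordinariness is to supply Frobenius splitting of $A$ via \autoref{prop:facts-about-ordinary-varieties}; without it the method says nothing, which is why the restriction to ordinary $A$ is imposed in positive characteristic.
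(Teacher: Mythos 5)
Your proposal is correct and takes essentially the same route as the paper's own proof: ordinariness together with $\omega_A\simeq\sO_A$ gives that $A$ is \Fsp via \autoref{prop:facts-about-ordinary-varieties}\autoref{item:2}, and then the nonvanishing of $H^1(A,\sO_A)$ (available at index $j=1$ precisely because $\dim A\geq 2$) contradicts \autoref{thm:F-split-anti-canonical-implies-ACM}\autoref{item:6}. The additional bookkeeping you spell out (adjunction data, $\dim X\geq 3$, the Cartier condition) is left implicit in the paper's two-line argument but is the same check.
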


\begin{proof}
  If $\kar k>0$ then $A$ is \Fsp by \cite[1.1]{MR916481} or
  \autoref{prop:facts-about-ordinary-varieties}\autoref{item:2}.  By the dimension
  assumption $H^1(A,\sO_A)\neq 0$ and hence the statement follows from
  \autoref{thm:F-split-anti-canonical-implies-ACM}\autoref{item:6}.
\end{proof}

\begin{rem}
  If $\kar k=0$, essentially the same statement as \autoref{cor:sommese} was proved
  in \cite{MR0404703}. However, the proof given here is quite different even in the
  $\kar k=0$ case. In particular, \cite{MR0404703} relied on topological arguments
  and it only stated that $A$ could not be an ample divisor on a smooth projective
  variety.
\end{rem}

\section{Non-smoothable singularities}
\noin
In this section I prove Sommese's second theorem which is an application of
\autoref{thm:F-split-anti-canonical-implies-ACM} showing that certain singularities
are not smoothable.

% I will not try to state the most general version possible since a stronger result is
% proved in \cite[Cor.~8.7]{KollarKovacs18b}. The interest in this proof is that it is
% much shorter and uses a lot less.
%fewer technical results.

Next recall that the Betti numbers of smooth projective varieties are defined as the
dimension of $\ell$-adic cohomology groups, i.e., let $Z$ be a smooth projective
variety over %an algebraically closed field
$k$ and let $\ell$ be a prime different from $\kar k$. Then
$b_i(Z)\leteq \dim H^i_{\acute{e}t}(Z,\bQ_\ell)$.  Note that this definition is valid
in all characteristics. In characteristic zero these numbers are the same as the
dimension of singular cohomology groups of the underlying topological space of $Z$,
which is the more common definition in this case.

The following lemma is folklore. For the reader's convenience a proof is provided. It
also follows from various stronger statements which we will not need here. 

\begin{lem}\label{lem:h1-not-zero}
  Let $Z$ be a smooth projective variety over an algebraically closed field $k$. If
  $b_1(Z)\neq 0$ then $H^1(Z,\sO_Z)\neq 0$.
  % If $\kar k>0$ assume that $Z$ is ordinary.
\end{lem}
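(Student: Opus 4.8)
The plan is to relate the first $\ell$-adic Betti number to the dimension of the Picard variety, and then to bound that dimension by $\dim_k H^1(Z,\sO_Z)$. Concretely, I aim to establish the chain
\[
b_1(Z) \;=\; 2\dim \pico(Z) \;\leq\; 2\dim_k H^1(Z,\sO_Z),
\]
from which the conclusion is immediate: if $b_1(Z)\neq 0$ then $H^1(Z,\sO_Z)\neq 0$. Here $\pico(Z)$ denotes the reduced identity component of the Picard scheme, an abelian variety.

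First I would compute $H^1_{\acute{e}t}(Z,\bQ_\ell)$ by way of the Kummer sequence. For $\ell\neq\kar k$ and each $n$, the short exact sequence $0\to\mu_{\ell^n}\to\bG_m\xrightarrow{\ell^n}\bG_m\to 0$ of \'etale sheaves gives a long exact sequence in which $H^0(Z,\bG_m)=k^\times$ is $\ell$-divisible (as $k$ is algebraically closed) and $H^1(Z,\bG_m)=\Pic(Z)$; hence $H^1_{\acute{e}t}(Z,\mu_{\ell^n})\simeq\Pic(Z)[\ell^n]$. Passing to the inverse limit over $n$ yields $H^1_{\acute{e}t}(Z,\bZ_\ell(1))\simeq T_\ell\Pic(Z)$. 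Since $\NS(Z)=\Pic(Z)/\pico(Z)$ is finitely generated, its $\ell$-primary torsion is finite, so $T_\ell\Pic(Z)=T_\ell\pico(Z)$; and because $\pico(Z)$ is an abelian variety of some dimension $g$, its Tate module is free of rank $2g$ over $\bZ_\ell$. The Tate twist does not affect $\bQ_\ell$-dimension, so $b_1(Z)=\dim_{\bQ_\ell}H^1_{\acute{e}t}(Z,\bQ_\ell(1))=2g=2\dim\pico(Z)$.

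Second, I would invoke the deformation theory of the Picard functor: the Zariski tangent space to the Picard scheme $\mathbf{Pic}_{Z/k}$ at the identity is canonically $H^1(Z,\sO_Z)$. Since $\pico(Z)$ is the reduced identity component of $\mathbf{Pic}_{Z/k}$, we obtain $\dim\pico(Z)=\dim\mathbf{Pic}_{Z/k}\leq\dim_k H^1(Z,\sO_Z)$. Combining this with the previous display gives the stated chain and finishes the argument.

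The only genuinely delicate point is the possible non-reducedness of $\mathbf{Pic}_{Z/k}$ in positive characteristic: one cannot assert the equality $\dim\pico(Z)=\dim_k H^1(Z,\sO_Z)$, only the inequality. Fortunately that inequality is exactly what is needed, so non-reducedness causes no difficulty. The characteristic-free identification $b_1(Z)=2\dim\pico(Z)$ via the Kummer sequence is the technical heart of the proof; the remaining steps are formal. (In $\kar k=0$ one could instead invoke Hodge symmetry to get $b_1(Z)=2\dim_k H^1(Z,\sO_Z)$ directly, but the Picard-variety argument has the advantage of working uniformly in all characteristics.)
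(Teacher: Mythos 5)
Your proof is correct and follows essentially the same route as the paper's: both pass from $b_1(Z)\neq 0$ through the Kummer sequence to torsion in $\Pic Z$, use finite generation of $\NS(Z)$ (the Theorem of the Base) to conclude that $\pico Z$ is positive dimensional, and finish via the identification of $H^1(Z,\sO_Z)$ with the tangent space of the Picard scheme at the identity. Your version is merely more quantitative (establishing $b_1(Z)=2\dim\pico Z$ via Tate modules rather than just unboundedness of torsion) and is more careful than the paper about the possible non-reducedness of $\mathbf{Pic}_{Z/k}$, correctly using only the inequality $\dim\pico Z\leq\dim_k H^1(Z,\sO_Z)$.
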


\begin{proof}
%  If $\kar k=0$, then this follows directly from the Hodge decomposition of singular
%  cohomology.   If $\kar k>0$, then the Kummer sequence 
%
  If $b_1(Z)\neq 0$ then by the expression of \'etale cohomology as a limit implies
  that there are arbitrarily high order torsion elements in $\Pic Z$
  (cf.~\cite[4.4.4]{MR1317816}). It follows that then $\Pic^\circ Z$ cannot be finite
  by the Theorem of the Base \cite{Thm_of_the_base,MR0052154}. Hence $\Pic^\circ Z$
  is positive dimensional and then so is $H^1(Z,\sO_Z) = T_{[\sO_Z]}\Pic^\circ Z$.
\end{proof}

I will use the notion of smoothability used in \cite{MR522037} and
\cite{MR0349677}. This is slightly more restrictive than the one used in
\cite{KollarKovacs18b}.

\begin{defini}[cf.~\cite{MR0349677,MR522037})]\label{def:smoothability}
  For a morphism $f:\sX\to T$ and $t\in T$, the fibre of $f$ over $t$ will be denoted
  by $\sX_t= f^{-1}(t)$.
  Let $X$ be a closed subscheme of a scheme $P$ over an algebraically closed field
  $k$. A \emph{deformation of $X$ in $P$} is a morphism $(f:\sX\to T)$ where
  \begin{enumerate}
  \item $T$ is a connected positive dimensional scheme of finite type over $k$,
  \item $\sX\subseteq P\times T$ is a closed subscheme which is flat over $T$,
  \item there exists a closed point $0\in T$, such that $\sX_0\simeq X$, and
  \item $f$ is the restriction of the projection morphism $P\times T\to T$ to $\sX$.
  \end{enumerate}
  A deformation $(f:\sX\to T)$ of $X$ in $P$ will be called a \emph{Gorenstein
    deformation} if for all $t\in T$, $t\neq 0$, the fibre $\sX_t$ is Gorenstein.  It
  will be called a \emph{smooth deformation} if for all $t\in T$, $t\neq 0$, the
  fibre $\sX_t$ is smooth over $k(t)$. In this latter case we also say that $X$ is
  \emph{smoothable in $P$}.
\end{defini}

A somewhat weaker statement than the following was proved in \cite[2.1.1]{MR522037}
in characteristic zero and a somewhat more general statement, as a consequence of
much deeper results, was established in \cite[Cor.~8.7]{KollarKovacs18b} in all
characteristics.

\begin{thm}\label{thm:non-smoothable}
  Let $X\subseteq \bP^n$ be a projective variety over an algebraically closed field
  $k$ and let $H\subseteq \bP^n$ be a hypersurface such that $Z=X\cap H$ is smooth,
  $\dim Z>1$, $\omega_Z\simeq \sO_Z$, and $b_1(Z)\neq 0$ (e.g., $Z$ is an abelian
  variety of dimension at least $2$). Further assume that if $\kar k>0$ then $Z$ is
  ordinary. Let $(f:\sX\to T)$ be a deformation of $X$ in $\bP^n$ and assume that if
  $\kar k=0$ then $\sX_t$ has rational \sings for $t\neq 0$. Then there exists a
  non-empty open neighbourhood $0\in U\subseteq T$ such that $\sX_t$ has isolated
  non-Gorenstein singularities for every $t\in U$.  Consequently $X$ does not admit a
  Gorenstein deformation in $\bP^n$ and in particular, it is not smoothable in
  $\bP^n$.
\end{thm}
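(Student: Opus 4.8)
The plan is to apply the main vanishing theorem \autoref{thm:F-split-anti-canonical-implies-ACM} to the nearby fibres $\sX_t$, using the relative hyperplane section as the ample divisor with trivial canonical bundle, and to derive a contradiction from $b_1(Z)\neq 0$.

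First I would form the relative hyperplane section. Set $\sZ\leteq \sX\cap(H\times T)\subseteq \bP^n\times T$, with induced morphism $\pi\col\sZ\to T$, so that $\sZ_t=\sX_t\cap H=:Z_t$ and $\sZ_0=Z$. Since $Z_0=Z$ is a proper divisor on $\sX_0$, the equation of $H$ is a non-zero-divisor on $\sX_t$ for $t$ near $0$; hence $\sZ\to T$ is flat over a neighbourhood of $0$, and each $Z_t$ is an effective ample Cartier divisor on $\sX_t$, being cut out by $\sO_{\bP^n}(1)$. As $Z_0$ is smooth, the smooth locus of $\pi$ is open and contains $Z_0$, so after shrinking $T$ I may assume $\pi$ is smooth and projective, with $Z_t$ smooth of dimension $\dim Z>1$ for all $t$. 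In particular $\dim\sX_t=\dim Z+1\geq 3$, and $\sX_t$ is smooth, hence Gorenstein, along $Z_t$. Smooth proper base change for $\ell$-adic cohomology gives $b_1(Z_t)=b_1(Z_0)\neq 0$, so $H^1(Z_t,\sO_{Z_t})\neq 0$ by \autoref{lem:h1-not-zero}; and by \autoref{prop:facts-about-ordinary-varieties}\autoref{item:1} each $Z_t$ is again ordinary when $\kar k>0$.

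The key technical point is that $\omega_{Z_t}\simeq\sO_{Z_t}$ persists in the family, and I expect this to be the main obstacle. The relative dualizing sheaf $\omega_{\sZ/T}$ is invertible and restricts to $\omega_{Z_0}\simeq\sO_{Z_0}$; being the restriction of a fixed line bundle on $\sZ$, the class of $\omega_{Z_t}$ is numerically trivial for every $t$ (intersection numbers against specializing curves are preserved). It therefore suffices to produce a nonzero section, for then $\omega_{Z_t}\simeq\sO_{Z_t}(E)$ with $E$ effective and numerically trivial, whence $E=0$. By Serre duality $h^0(Z_t,\omega_{Z_t})=h^{\dim Z}(Z_t,\sO_{Z_t})$, and $h^0(Z_0,\omega_{Z_0})=1$. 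In characteristic zero the numbers $h^{\dim Z}(Z_t,\sO_{Z_t})$ are constant by deformation invariance of Hodge numbers, giving the section. In positive characteristic the Hodge numbers may a priori jump, and here ordinarity is essential: on the ordinary locus the crystalline slope data is locally constant and forces $h^{\dim Z}(Z_t,\sO_{Z_t})=h^{\dim Z}(Z_0,\sO_{Z_0})=1$ (this is the Illusie circle of ideas underlying \autoref{prop:facts-about-ordinary-varieties}). Granting this, $\omega_{Z_t}\simeq\sO_{Z_t}$, and combined with ordinarity \autoref{prop:facts-about-ordinary-varieties}\autoref{item:2} shows that $Z_t$ is \Fsp when $\kar k>0$.

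Now I would argue by contradiction. Suppose some $\sX_t$ with $t\neq 0$ in the above neighbourhood is Gorenstein, recalling that in the case $\kar k=0$ it has rational \sings by hypothesis. Upper semicontinuity of $h^0(\sO)$ together with the connectedness of $\sX_0$ gives $h^0(\sX_t,\sO_{\sX_t})=1$, so $\sX_t$ is connected, hence equidimensional as a connected Gorenstein scheme. Thus $\sX_t$ is an equidimensional projective Gorenstein scheme carrying the effective ample Cartier divisor $Z_t$ with $\omega_{Z_t}\simeq\sO_{Z_t}$ and $\dim\sX_t\geq 3$, with $Z_t$ \Fsp (resp.\ $\sX_t$ with rational \sings) according to the characteristic, so \autoref{thm:F-split-anti-canonical-implies-ACM}\autoref{item:6} yields $H^1(Z_t,\sO_{Z_t})=0$, contradicting $H^1(Z_t,\sO_{Z_t})\neq 0$. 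Hence no such $\sX_t$ is Gorenstein. Finally I would locate the bad locus: since $\sX_t$ is smooth along the smooth divisor $Z_t$, its non-Gorenstein locus is disjoint from $Z_t=\sX_t\cap H$; as the latter is ample, every positive-dimensional closed subset of $\sX_t$ meets it, so the non-Gorenstein locus is finite, and non-empty by the above. This gives the desired neighbourhood $U$ on which $\sX_t$ has isolated non-Gorenstein singularities (and at $t=0$ in positive characteristic the same argument applies, since $Z$ is itself \Fsp). As $T$ is connected and positive dimensional, $U\setminus\{0\}\neq\emptyset$, so $(f\col\sX\to T)$ is not a Gorenstein deformation; in particular, since smooth fibres are Gorenstein, $X$ is not smoothable in $\bP^n$.
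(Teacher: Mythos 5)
Your proposal is correct and follows essentially the same route as the paper's proof: shrink $T$ so that the hyperplane section $Z_t$ remains smooth of dimension $>1$ with trivial canonical bundle, nonzero $b_1$, and (in positive characteristic) ordinary, then combine \autoref{prop:facts-about-ordinary-varieties}, \autoref{lem:h1-not-zero}, and \autoref{thm:F-split-anti-canonical-implies-ACM}\autoref{item:6} to rule out Gorenstein fibres, and finally use ampleness of the smooth hyperplane section to force the non-Gorenstein locus to be finite. The only real difference is one of thoroughness: the paper compresses the entire persistence step into a single ``Observe that there exists a non-empty open set $U$,'' while you correctly isolate $\omega_{Z_t}\simeq\sO_{Z_t}$ as the one genuinely delicate openness claim, and your sketch of it is sound and completable --- numerical triviality of $\omega_{Z_t}$ is an open condition (openness of $\Pic^\tau$ in the relative Picard scheme, rather than the looser ``specializing curves'' heuristic), and the needed constancy of $h^{\dim Z}(Z_t,\sO_{Z_t})$ follows in characteristic zero from deformation invariance of Hodge numbers and in positive characteristic from ordinarity: Newton polygon equals Hodge polygon on ordinary fibres, Newton polygons rise under specialization, and coherent upper semicontinuity gives the opposite inequality, so the slope-zero segment, hence $h^{0,\dim Z}$, is locally constant. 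Your additional care with flatness of $\sZ\to T$, equidimensionality of a connected Gorenstein fibre, and the observation that $U\setminus\{0\}\neq\emptyset$ because $T$ is connected and positive dimensional fills in points the paper leaves implicit; none of this changes the approach.
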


\begin{proof}
  Observe that there exists a non-empty open set %neighbourhood
  $0\in U\subseteq T$ such that $\sX_t$ has the same properties as $X=\sX_0$, that
  is, there exists a hypersurface $H_t\subseteq \bP^n$ such that $Z_t=\sX_t\cap H_t$
  is smooth, $\dim Z_t>1$, $\omega_{Z_t}\simeq \sO_{Z_t}$, and $b_1(Z_t)\neq 0$. We
  may also assume that if $\kar k>0$ then $Z_t$ is ordinary by
  \autoref{prop:facts-about-ordinary-varieties}\autoref{item:1} and note that if
  $\kar k=0$ then $\sX_t$ has rational singularities for $t\neq 0$ by assumption.
  % Thus it is enough to prove that $X$ has isolated non-Gorenstein singularities.

  It follows that $Z_t$ is \Fsp by
  \autoref{prop:facts-about-ordinary-varieties}\autoref{item:2} and
  $H^1(Z_t,\sO_{Z_t})\neq 0$ by \autoref{lem:h1-not-zero} and hence $\sX_t$ is not
  Gorenstein by \autoref{thm:F-split-anti-canonical-implies-ACM}\autoref{item:6}. 

  On the other hand, as a hypersurface section of $\sX_t$ is smooth, its singular set
  must be zero-dimensional and hence $\sX_t$ must have isolated non-Gorenstein
  singularities.
\end{proof}

\noin Finally, this implies the following:

\begin{cor}\label{cor:sommese-two}
  Let $A\subseteq \bP^{n-1}$ be an abelian variety of dimension at least $2$ over $k$
  and let $X$ denote the cone over $A$ in $\bP^{n}$.  If $\kar k>0$ further assume
  that $A$ is ordinary. Then $X$ is not smoothable in $\bP^{n}$.
\end{cor}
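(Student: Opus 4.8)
The plan is to obtain this as an immediate specialization of \autoref{thm:non-smoothable}; the only real task is to present the cone together with a suitable smooth hyperplane section. First I would set up the geometry: let $v\in\bP^n$ be the vertex of the cone $X$ and let $H=\bP^{n-1}\subseteq\bP^n$ be the hyperplane containing the base $A$. Since $v\notin H$, each ruling line of the cone meets $H$ in exactly one point, namely the point of $A$ through which it passes, so the scheme-theoretic intersection is exactly $Z\leteq X\cap H=A$. Concretely, if $A=V(I)$ for a homogeneous ideal $I$ in the coordinates on $\bP^{n-1}$, then $X=V(I)\subseteq\bP^n$ with vertex the extra coordinate point, and $X\cap V(x_n)=V(I,x_n)=A$.

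Next I would verify, one by one, the hypotheses that \autoref{thm:non-smoothable} imposes on the smooth section. The cone over the irreducible projective variety $A$ is again an irreducible projective variety, so $X\subseteq\bP^n$ is admissible as input. The section $Z=A$ is smooth and has $\dim Z=\dim A\geq 2>1$. The canonical bundle is trivial, $\omega_Z=\omega_A\simeq\sO_A$, because $A$ is an abelian variety, and its first Betti number is $b_1(A)=2\dim A\neq 0$. Finally, in the case $\kar k>0$ the standing hypothesis that $A$ is ordinary supplies precisely the ordinariness assumption on $Z$ required by the theorem.

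With these checks in place, \autoref{thm:non-smoothable} applies verbatim and gives that $X$ is not smoothable in $\bP^n$; note that the theorem already builds in the reduction that a smooth (hence Gorenstein, and in characteristic zero rational-singularity) deformation would force a general fibre to be simultaneously smooth and non-Gorenstein, which is absurd. I do not expect any genuine obstacle here, as the argument is a direct citation. The only point meriting a line of care is the identification $Z=X\cap H=A$ as a reduced smooth scheme, i.e.\ confirming that the chosen hyperplane meets the cone cleanly in its base (away from the vertex) rather than introducing nonreduced or embedded structure; everything else is either a standard property of abelian varieties or a hypothesis of the corollary.
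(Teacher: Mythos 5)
Your proof is correct and takes essentially the same route as the paper: the paper states this corollary as an immediate consequence of \autoref{thm:non-smoothable} with no further argument, and your verification---that the hyperplane containing the base cuts the cone scheme-theoretically in $Z=A$, that $A$ supplies smoothness, $\dim Z>1$, $\omega_Z\simeq\sO_Z$, $b_1\neq 0$, and ordinarity, and that a smoothing automatically has rational-singularity fibres in characteristic zero---is exactly the routine specialization the paper leaves to the reader.
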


\end{document}